\documentclass{article}

\usepackage{graphicx}
\usepackage{amsmath,amssymb,amsfonts}
\usepackage{amsthm}
\usepackage{authblk}
\usepackage{xcolor}
\usepackage{tikz}

\theoremstyle{plain}
\newtheorem{theorem}{Theorem}
\newtheorem{lemma}[theorem]{Lemma}

\theoremstyle{definition}
\newtheorem{definition}[theorem]{Definition}
\newtheorem{remark}[theorem]{Remark}

\title{Undirected edge geography games on stacked prism graphs}

\author[1]{Tharit Sereekiatdilok\thanks{Email: tharit.yoii@gmail.com}}
\author[1,2]{Panupong Vichitkunakorn\thanks{Email: panupong.v@psu.ac.th}}
\affil[1]{Division of Computational Science, Faculty of Science, Prince of Songkla University, Hat Yai, Songkhla 90110, Thailand}
\affil[2]{Research Center in Mathematics and Statistics with Applications, Prince of Songkla University, Hat Yai, Songkhla 90110, Thailand}
\date{}

\begin{document}

\maketitle

\begin{abstract}
The undirected edge geography is a two-player combinatorial game on an undirected graph. The players start at the root vertex and alternately move the root along an incident edge to its other endpoint and then delete that edge. The first player who has no remaining move is the loser. For positive integers $m$ and $n$ where $m\geq 3$, the stacked prism graph is $SP(m,n)=C_m\square P_n$. In this paper, we completely determine the winner of the game on $SP(2m,n)$ for $m\geq 2$ and $SP(m,2)$ for $m\geq 3$, and provide a winning strategy for the winner.
\end{abstract}

\noindent\textbf{Keywords:} Combinatorial game, Undirected edge geography, Prism graph, Stacked prism graph, Winning strategy

\noindent\textbf{2020 Mathematics Subject Classification:} 91A46, 05C57, 05C38

\section{Introduction}
\label{intro} 

The geography game was first introduced in \cite{FRAENKEL1993197} as a two-player game on a directed graph. 
It starts at a vertex $v$ and players alternately move to another vertex along an arrow (a directed edge) and then delete the previously visited vertex.
The first player who has no legal move is the loser.
Later, the game is extended to an undirected graph and the previously visited edge is removed instead of a vertex.
This version of the game is called the \emph{Undirected Edge Geography (UEG)} \cite{Fraenkel1993}.

In \cite{sereekiatdilok2026undirected}, the undirected edge geography game on a grid is considered. The winner at each root vertex is completely determined and a strategy for the winner is provided.
In this work, we consider the game on the \emph{stacked prism} graph $SP(m,n) := C_m \square P_n$ where $m\geq 3$.
The graph is considered as a grid graph with $m+1$ columns and $n$ rows, where the first and the last column are identified.
In particular, we embed $SP(m,n)$ inside a rectangle $[0,m]\times[0,n+1]$ such that $V(SP(m,n)) = \{0,\dots,m\}\times \{1,\dots,n\}$, the vertices $(0,j)$ and $(m,j)$ are identified for all $j$, and two vertices $(i,j)$ and $(i',j')$ are adjacent if and only if $|i-i'|+|j-j'|=1$, where the first coordinates are taken modulo $m$. An example of $SP(6,4)$ is shown in Figure~\ref{fig:SP(6,4)}.

\begin{figure}[!ht]
    \centering
    \includegraphics[width=0.6\textwidth]{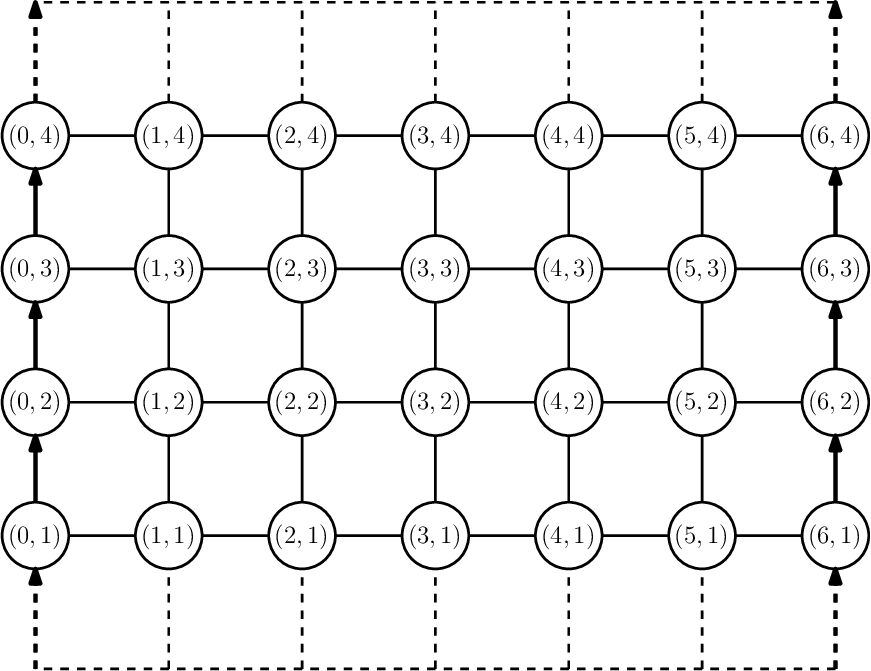}
    \caption{$SP(6,4)$ inside the rectangle $[0,6]\times[0,5]$.}
    \label{fig:SP(6,4)}
\end{figure}

For a graph $G$ with a starting vertex $v$, denoted by a game $(G,v)$, we try to find its \emph{position}, which determines which player has a winning strategy.
A game $(G,v)$ is an \emph{N-position} if the first player has a winning strategy. Otherwise, it is a \emph{P-position}.
The main tool that we use to determine the position of a game in this work is an \emph{even kernel}.

\begin{definition}[\cite{Fraenkel1993}]
    An \emph{even kernel} for a simple graph $G$ is a nonempty set $S \subseteq V(G)$ such that
    \begin{enumerate}
        \item $S$ is independent and
        \item for any vertex $u\notin S$, the number of its neighbors in $S$ is even, i.e., $|N(u)\cap S|$ is even.
    \end{enumerate}
\end{definition}

\begin{theorem}[\cite{Fraenkel1993}]\label{EvenKernel}
    If $S$ is an even kernel for a simple graph $G$ and $v\in S$, then $(G,v)$ is a P-position.
    Moreover, if $G$ is a bipartite graph, then $(G,v)$ is a P-position if and only if $v$ is in an even kernel for a graph $G$.
\end{theorem}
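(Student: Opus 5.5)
The first assertion will follow from a strategy-stealing-free "pairing" argument that exploits the parity condition of the even kernel. The key invariant is that whenever it is the first player's turn, the current root lies in $S$. Every vertex $u\notin S$ then has an even number of \emph{remaining} edges into $S$. Initially the root $v\in S$, and since $S$ is independent, any move from $v$ goes to some $u\notin S$, deleting the edge $vu$. Before this move, $u$ had an even number of edges to $S$, one of which was $vu$. So after the deletion, $u$ has an odd number, hence at least one, remaining edges to $S$. The second player answers by moving along one of them to a vertex of $S$, and the invariant is restored.

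To make the invariant precise, I will maintain the following condition on the current graph $G'$ after each of the second player's responses: for every $u\notin S$, $|E_{G'}(u,S)|$ is even. Edges inside $S$ never exist. Edges between two vertices outside $S$ are irrelevant to this count; they are never used, because the first player always departs from a vertex of $S$, and the second player chooses to traverse only edges into $S$. Each round deletes exactly two edges, $su$ and then $us'$, both incident to the same $u\notin S$ and both going into $S$. So $|E(u,S)|$ drops by two and stays even, while the counts for all other vertices outside $S$ are unchanged. The only delicate point is the case $s'=s$, but that would require the already deleted edge, so it cannot occur. Thus the second player always has a reply, and since the game is finite, the first player eventually has no move and loses. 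Hence $(G,v)$ is a P-position.

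For the bipartite converse, let $G$ have bipartition $A\cup B$. I will proceed by induction on $|E(G)|$ and show that if $(G,v)$ is a P-position, then $v$ lies in an even kernel. A natural construction is to take $S$ as the set of vertices $w$ on the same side as $v$ such that $(G,w)$ is a P-position, perhaps adjusted for the edge deletions along the way. Because $G$ is bipartite, every cycle is even. This should allow one to characterise P-positions via a linear-algebraic parity condition over $\mathbb{F}_2$: $v$ is in an even kernel iff $v$ lies in the support of a vector in the kernel of the biadjacency matrix restricted appropriately. One would then show that if no such vector contains $v$, there is an edge $vu$ such that $(G-vu,u)$ has $u$ in an even kernel, and conversely.

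The main obstacle is this converse. The winning move must be extracted from the failure of the linear system, and it must be shown that deleting an edge preserves the kernel structure on the opponent's side. My first attempt would be to use the $\mathbb{F}_2$ rank argument: the vector $e_v$ is not in the row space complement, so some neighbour $u$ has an odd dependency, and moving there should create an even kernel containing $u$ in $G-vu$. Since the paper cites this from \cite{Fraenkel1993}, I would defer the details of the converse to that reference and only ever use the forward direction in practice.
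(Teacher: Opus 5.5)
Your invariant argument for the first assertion is correct and complete. It is the standard second-player strategy from \cite{Fraenkel1993}, which the paper cites without giving a proof.

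The gap is the bipartite converse, P-position $\Rightarrow$ $v$ lies in an even kernel. You explicitly defer it, so the proposal does not prove the full statement. (The paper's later proofs only invoke the forward direction, but the statement as given asserts the converse too.) The construction you try first would not work. The set of all P-position vertices on $v$'s side is in general not an even kernel: in $K_{1,3}$ every leaf is a P-position, yet the centre has three neighbours among the leaves. No induction on $|E(G)|$ is needed either. Your $\mathbb{F}_2$ idea is the right one, but the phrase ``$e_v$ is not in the row space complement'' is too imprecise to carry the argument. The exact duality fact that produces the winning move is missing.

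Here is the missing step; argue the contrapositive. Let $v\in X$, let $Y$ be the other side, and let $M$ be the $Y\times X$ biadjacency matrix over $\mathbb{F}_2$.

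\textbf{Step 1.} If some $x\in\ker M$ has $x_v=1$, then $\operatorname{supp}(x)\subseteq X$ is an even kernel containing $v$. It is independent, vertices of $X$ outside it have no neighbours in it, and each vertex of $Y$ has an even number of neighbours in it.

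\textbf{Step 2.} So if $v$ lies in no even kernel, every $x\in\ker M$ has $x_v=0$. That is, $e_v\in(\ker M)^{\perp}=\operatorname{row}(M)$, using the double-perp identity for the nondegenerate dot product on $\mathbb{F}_2^X$. Hence there is $T\subseteq Y$ with $\sum_{y\in T}\mathbf{1}_{N(y)}=e_v$. In words, $v$ has an odd number of neighbours in $T$, and every other vertex of $X$ has an even number.

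\textbf{Step 3.} Since that odd count is at least one, pick $u\in N(v)\cap T$. In $G-vu$ the set $T$ is nonempty and independent. Every vertex of $X$ now has an even number of neighbours in $T$, including $v$, which lost exactly the edge $vu$. Vertices of $Y\setminus T$ have no neighbours in $T$. So $T$ is an even kernel of $G-vu$ containing $u$.

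\textbf{Step 4.} By your first part, $(G-vu,u)$ is a P-position. Hence moving from $v$ to $u$ wins, and $(G,v)$ is an N-position. This closes the converse in a few lines.
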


For the rooted undirected edge geography game on grid graphs, we have the following result.

\begin{theorem}[\cite{sereekiatdilok2026undirected}]\label{thm: grid}
    Let $\delta = \gcd(m+1, n+1)$ and $v=(a,b)$. Then $(G_{m\times n}, v)$ is a P-position if and only if $\delta \nmid a$ and $\delta \nmid b$.
\end{theorem}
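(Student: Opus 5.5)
Since the grid $G_{m\times n}$ is bipartite, the second part of Theorem~\ref{EvenKernel} reduces the statement to a structural claim: $v=(a,b)$ lies in some even kernel if and only if $\delta\nmid a$ and $\delta\nmid b$. I would work over $\mathbb{F}_2$. A set $S$ is an even kernel exactly when $S$ is independent and its indicator vector $x$ satisfies $Ax=0$ over $\mathbb{F}_2$, where $A$ is the adjacency matrix. Indeed, on the vertices of $S$ the equation holds automatically by independence, and off $S$ it is precisely the evenness condition. I would also pad the grid with zero rows and columns at $a\in\{0,m+1\}$ and $b\in\{0,n+1\}$, matching the embedding used for $SP(m,n)$. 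Then $Ax=0$ becomes the local rule
\[ x(a-1,b)+x(a+1,b)+x(a,b-1)+x(a,b+1)=0 \pmod 2 \]
at every interior point, with $x=0$ on the boundary.

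\emph{Sufficiency.} Given $v=(a_0,b_0)$ with $\delta\nmid a_0$ and $\delta\nmid b_0$, I would build an explicit kernel out of diagonal ``light rays''. Let $x$ be the indicator of the points $(a,b)$ satisfying
\[ a+b\equiv \pm(a_0+b_0) \ \text{or}\ a-b\equiv \pm(a_0-b_0)\pmod{2\delta}, \]
restricted to points with $\delta\nmid a$ and $\delta\nmid b$. Geometrically, this is the diagonal through $v$ reflected off the lines $a\equiv 0$ and $b\equiv 0 \pmod\delta$. Because $\delta$ divides both $m+1$ and $n+1$, these reflected rays close up consistently with the boundary.

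\begin{itemize}
\item \emph{Independence.} All chosen points have $a+b\equiv a_0+b_0\pmod 2$, since $2\delta$ is even, so no two of them are adjacent.
\item \emph{Evenness.} This is a local check of each non-$S$ vertex against four cases: a generic point meets $0$ or $2$ rays, a point where two rays cross, a point on a mirror line $\delta\mid a$ or $\delta\mid b$ (whose two diagonal neighbours on either side pair up by reflection), and a corner. In each case the count of neighbours in $S$ is even.
\end{itemize}

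So $S$ is an even kernel containing $v$, and $(G_{m\times n},v)$ is a P-position by the first part of Theorem~\ref{EvenKernel}.

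\emph{Necessity.} This is the main obstacle. I must show that every $x$ with $Ax=0$ vanishes on the lines $a\equiv0$ and $b\equiv0\pmod\delta$; independence is not even needed here.

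\begin{itemize}
\item \emph{Extension.} Extend $x$ to $\mathbb{Z}^2$ by the reflections $x(-a,b)=x(a,b)$ and $x(a,-b)=x(a,b)$ (recall we work mod $2$), together with periodicity $2(m+1)$ in $a$ and $2(n+1)$ in $b$. The zero boundary rows make the local rule hold everywhere.
\item \emph{Propagation.} The local rule propagates information along diagonals: $x(a,b+1)$ is determined by the row below and the row two below. Hence $x$ is a sum of ``diagonal waves'' $f(a+b)+g(a-b)$, up to the parity structure of the two colour classes.
\item \emph{Periodicity.} The reflections and periodicity force $f$ and $g$ to be odd about $0$ and periodic with periods $2(m+1)$ and $2(n+1)$. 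Hence they are odd with period $2\gcd(m+1,n+1)=2\delta$.
\item \emph{Vanishing.} Evaluating at $a=k\delta$ then gives $x(k\delta,b)=0$, and symmetrically $x(a,k\delta)=0$.
\end{itemize}

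If the wave decomposition turns out to be delicate, my fallback is a dimension count. I would compute the $\mathbb{F}_2$-dimension of $\ker A$ via the factorisation $A=A_m\otimes I+I\otimes A_n$ and the Chebyshev-type polynomials $p_{m}$, where $\ker A$ corresponds to $\gcd(p_m(t),p_n(t+\text{shift}))$. I would then check that the ray patterns from the sufficiency part already span a space of that dimension, so every kernel vector is a combination of ray patterns and therefore vanishes on the mirror lines.
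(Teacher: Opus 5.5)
\textbf{Gap in the sufficiency construction.} Your explicit set is not an even kernel in general. You use the offset $a_0+b_0$ for the family $a+b\equiv\pm(\cdot)$ and the offset $a_0-b_0$ for the family $a-b\equiv\pm(\cdot)$. That is the antidiagonal through $v$ together with the diagonal through $v$, not one ray and its mirror images.

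Reflection in a mirror line $a\equiv 0\pmod\delta$ (i.e.\ $a\mapsto -a$ modulo $2\delta$) sends $\{a+b\equiv\pm s\}$ to $\{a-b\equiv\pm s\}$. Under your hypothesis this is a different family from $\{a-b\equiv\pm t\}$, since $s\equiv\pm t\pmod{2\delta}$ would force $\delta\mid b_0$ or $\delta\mid a_0$. So the claimed pairing of neighbours across mirror lines breaks.

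Concretely, take $m=5$, $n=2$, so $\delta=3$, and $v=(1,1)$. Your conditions $a+b\equiv\pm 2$ or $a-b\equiv 0\pmod 6$, with $3\nmid a$ and $3\nmid b$, give $S=\{(1,1),(2,2)\}$. The vertex $(3,2)\notin S$ has neighbours $(2,2),(4,2),(3,1)$, exactly one of which lies in $S$. The actual even kernel containing $(1,1)$ here is $\{(1,1),(2,2),(4,2),(5,1)\}$.

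\textbf{The repair.} It comes straight from your own necessity analysis. Take $c=a_0+b_0$, let $f$ be the indicator of $\pm c+2\delta\mathbb{Z}$, and set $x(a,b)=f(a+b)+f(a-b)$. Equivalently, $S=\{a+b\equiv\pm c\}\triangle\{a-b\equiv\pm c\}\pmod{2\delta}$, with the \emph{same} offset in both families; this is exactly the shape of $\widetilde S=A_+\triangle A_-$ in Theorem~\ref{thm:main1-1}. Then:
\begin{itemize}
\item Every $f(a+b)+g(a-b)$ satisfies the local rule on $\mathbb{Z}^2$.
\item Since $f$ is even and $2\delta$-periodic, $x$ vanishes on $a\equiv 0$ and $b\equiv 0\pmod\delta$. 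In particular it vanishes on the padding lines, because $\delta\mid m+1$ and $\delta\mid n+1$.
\item The support lies in the colour class $a+b\equiv c\pmod 2$, so it is independent.
\item $x(v)=1$, because $a_0-b_0\equiv\pm(a_0+b_0)\pmod{2\delta}$ would force $\delta\mid b_0$ or $\delta\mid a_0$.
\end{itemize}
No case check is needed.

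\textbf{Necessity.} Your algebraic route is sound: every $\mathbb{F}_2$ null vector vanishes at $v$, and then you apply the bipartite converse in Theorem~\ref{EvenKernel}. It differs from the constructive style the paper uses for its analogue, Theorem~\ref{thm:main1-3}, which exhibits an explicit first move $v\to u$ and an even kernel of $G-uv$ containing $u$. Yours is shorter but yields no explicit strategy.

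One point to tighten: $f$ and $g$ are determined only up to functions of the parity of their argument. Periodicity of the extended $x$ therefore only gives periodicity of $f$ up to a parity-dependent constant. Instead, derive the needed properties from the vanishing of the extended $x$ on the lines $a=0$, $b=0$, $a=m+1$, $b=n+1$:
\begin{itemize}
\item $x(0,b)=0$ gives $g(w)=f(-w)$.
\item $x(a,0)=0$ gives $g=f$, so $f$ is even.
\item $x(m+1,b)=0$ gives the period $2(m+1)$.
\item $x(a,n+1)=0$ gives the period $2(n+1)$.
\end{itemize}
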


In this paper, we consider the rooted undirected edge geography game on $SP(2m,n)$ and $SP(m,2)$. 
We get the following main results.

\begin{theorem}\label{thm:main1}
    For positive integers $m,n$ where $m\geq 2$, let $G=SP(2m,n)$, $v=(a,b)\in V(G)$ and $d=\gcd(2m,n+1)$.
    Then $(G,v)$ is a P-position if and only if $d\nmid b$.
\end{theorem}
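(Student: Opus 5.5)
Since $2m$ is even, $C_{2m}$ is bipartite, so $G=SP(2m,n)=C_{2m}\square P_n$ is bipartite. By Theorem~\ref{EvenKernel}, $(G,v)$ is a P-position if and only if $v$ lies in some even kernel of $G$. I would therefore prove two things: (i) every vertex $(a,b)$ with $d\nmid b$ lies in an even kernel; (ii) no even kernel contains a vertex $(a,b)$ with $d\mid b$.

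The first step is to linearize. Let $S$ be an even kernel with indicator vector $x\in\mathbb{F}_2^{V(G)}$. A vertex outside $S$ has an even number of neighbours in $S$ by definition. A vertex in $S$ has no neighbours in $S$, since $S$ is independent, and $0$ is even. Hence $Ax=0$ over $\mathbb{F}_2$, where $A$ is the adjacency matrix of $G$. Conversely, the support of any nonzero null vector of $A$ that is independent is an even kernel.

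Write $x_b\in\mathbb{F}_2^{\mathbb{Z}_{2m}}$ for row $b$, and set $x_0=x_{n+1}=0$. Let $C$ be the adjacency operator of $C_{2m}$, i.e. $C=t+t^{-1}$ in the ring $R=\mathbb{F}_2[t]/(t^{2m}-1)$. The equation $Ax=0$ becomes the recurrence
\[
x_{b+1}=Cx_b+x_{b-1}.
\]
It follows that $x_b=U_{b-1}(C)\,x_1$, where $U_k$ is the Chebyshev-type sequence over $\mathbb{F}_2$ given by $U_{-1}=0$, $U_0=1$, $U_{k+1}=yU_k+U_{k-1}$. The boundary condition $x_{n+1}=0$ becomes $U_n(C)x_1=0$. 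Two standard facts about these polynomials will be used:
\[
\gcd(U_{p-1},U_{q-1})=U_{\gcd(p,q)-1},
\qquad
U_{k-1}(t+t^{-1})=t^{-(k-1)}\,\frac{t^{k}-1}{t-1}\cdot\frac{t^k+1}{t+1}\cdot(\text{unit})
\]
in characteristic $2$, that is, $U_{k-1}(t+t^{-1})$ is, up to a unit, essentially $(t^{k}-1)^2/(t-1)^2$. Combining these with the relation $t^{2m}=1$ in $R$, I expect to show that
\[
U_n(C)x_1=0 \iff U_{d-1}(C)x_1=0, \qquad d=\gcd(2m,n+1).
\]

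For (ii): once this equivalence holds, $x_d=U_{d-1}(C)x_1=0$. The recurrence then restarts from $x_d=0$ with the same shape, which gives $x_{d+j}=x_d$-type periodicity up to reflection, and hence $x_{kd}=0$ for all $k$. So no null vector is supported on a row divisible by $d$, and in particular no even kernel contains such a vertex.

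For (i): I would write down explicit kernels rather than argue through the linear algebra. The plan is to take diagonal "light ray" sets
\[
S_c=\{(a,b): d\nmid b,\ a\equiv c+b \text{ or } a\equiv c-b \pmod{2d}\}
\]
(possibly with parity restrictions, modelled on the grid kernels behind Theorem~\ref{thm: grid}). These reflect off rows $0$ and $n+1$, which is legitimate because $d\mid n+1$. They also wrap consistently around the cycle, because $d\mid 2m$, though I must check that the period $2d$ in the column direction is compatible with $2m$; if it is not, I would use period $d$ with a sign/reflection tweak. I would then verify two local conditions:
\begin{itemize}
\item[(a)] $S_c$ is independent;
\item[(b)] every vertex outside $S_c$ sees $0$ or $2$ vertices of $S_c$.
\end{itemize}
Condition (b) splits into three cases: vertices on the rays' crossing points, vertices adjacent to the forbidden rows $b\equiv0\pmod d$, and vertices near rows $1$ and $n$. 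Finally, varying $c$ must cover every $(a,b)$ with $d\nmid b$.

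I expect the main obstacle to be this explicit construction, namely choosing the correct periodic pattern so that the wrap-around and reflection conditions are simultaneously consistent for all $d\mid 2m$. This includes the case where $2m/d$ is odd, in which a naive alternating-reflection pattern fails. If a direct pattern resists, the fallback is the algebra itself. Over $R$, the null space of $U_{d-1}(C)$ should contain vectors $x_1$ that give independent supports. For instance, take
\[
x_1=\sum_{j} t^{c+jd}\,(\text{a suitable parity-class element}),
\]
and check independence of the resulting rows $x_b=U_{b-1}(C)x_1$ directly.
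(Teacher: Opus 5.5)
\textbf{There is a genuine gap in part (i), in the case $2m/d$ odd.}

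Part (i) is essentially fine when $d\mid m$, i.e.\ when $d=\gcd(m,n+1)$. Then your rays $S_c$ are well defined on $\mathbb{Z}_{2m}$. Since the two congruences are mutually exclusive off the rows divisible by $d$, they are exactly the (rotated) kernel $A_+\triangle A_-$ of Theorem~\ref{thm:main1-1}.

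The gap is the case you flag but do not resolve: $2m/d$ odd, equivalently $d=2\gcd(m,n+1)$. Here $2d\nmid 2m$, so $S_c$ is not defined. The natural repair with period $d$ fails for a concrete reason:
\begin{itemize}
\item $A_+\triangle A_-$ taken modulo $d$ is still an even kernel. Neighbours still pair up, rows $0$ and $n+1$ still cancel, and it is independent because $d$ is even.
\item However, the two families now coincide and cancel on every row $j$ with $\tfrac d2\mid j$. So the kernel has no vertex in the rows $b\equiv d/2\pmod d$. These rows satisfy $d\nmid b$ and must still be covered.
\item For example, in $SP(12,7)$ ($d=4$) rows $2$ and $6$ are missed, and in $SP(6,3)$ ($d=2$) the set is empty.
\end{itemize}

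Your algebraic fallback does not close this either. It worries about independence, which is automatic: in a bipartite graph, restricting a null vector of $A$ to the colour class of $v$ gives another null vector. It never addresses the real content of (i), which is that for every $b$ with $d\nmid b$ some null vector is nonzero on row $b$.

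\textbf{Two ways to close it.}

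First, you could use the paper's construction for this case. Tile $[0,2m]\times[0,n+1]$ by $d\times d$ squares and take the interior lattice points of the inscribed diamonds with $i+j\not\equiv d/2\pmod 2$. The diamond centres lie exactly on the missing rows.

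Second, you could finish the algebra, as follows.
\begin{itemize}
\item With $\Phi_k=1+t+\dots+t^{k-1}$ one has $U_{k-1}(C)=t^{-(k-1)}\Phi_k^2$ in $R$.
\item Put $g_k=\gcd(t^{2m}+1,\Phi_k^2)$. The null space is the ideal generated by $(t^{2m}+1)/g_{n+1}$.
\item A null vector nonzero on row $b$ exists iff $g_{n+1}\nmid g_b$.
\item Write $2m=2^em_0$ and $k=2^sk_0$ with $m_0,k_0$ odd. Then $g_k$ is the product of the irreducible factors of $t^{\gcd(k_0,m_0)}+1$, each with multiplicity $\min(2^e,2^{s+1})$, except $t+1$, which has multiplicity $\min(2^e,2^{s+1}-2)$.
\item This count gives $g_{n+1}=g_d$, which is exactly your claimed equivalence in (ii).
\item The same count gives $g_c\neq g_d$ for every proper divisor $c$ of $d$. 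Since $g_d\mid g_b$ iff $g_{\gcd(b,d)}=g_d$, this yields (i).
\item Then rotate to reach any column $a$, and restrict to a colour class.
\end{itemize}
The identity $\gcd(U_{p-1},U_{q-1})=U_{\gcd(p,q)-1}$ in $\mathbb{F}_2[y]$ does not by itself give the equivalence in (ii), because you are working modulo $t^{2m}+1$. The factor $t+1$ needs the separate count above.

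\textbf{On part (ii).} Once this is checked, your (ii) is a valid and much shorter alternative to the paper's trail-and-labelling argument. It is, however, nonconstructive. It relies on the bipartite converse in Theorem~\ref{EvenKernel} rather than exhibiting Player~1's winning move $v\to u$ together with an even kernel of $G-uv$ containing $u$.
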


\begin{theorem}\label{thm:main2}
    Let $m\geq 3$. The game on $SP(m,2)$ is a P-position if and only if $m\equiv 0 \pmod{3}$.
\end{theorem}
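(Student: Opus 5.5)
The plan splits on the parity of $m$. When $m$ is even, $SP(m,2)$ is bipartite and the result follows from Theorem~\ref{thm:main1}. When $m$ is odd, the graph is not bipartite, Theorem~\ref{EvenKernel} gives only one direction, and explicit strategies are needed. First note that $SP(m,2)=C_m\square P_2$ is vertex-transitive: it has rotations $(i,j)\mapsto(i+1,j)$, reflections $(i,j)\mapsto(-i,j)$ and the row swap $(i,j)\mapsto(i,3-j)$. So the position does not depend on the root, and we may take the root to be $v=(0,1)$.

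\emph{Case $m$ even.} Write $m=2m'$ with $m'\ge 2$ and apply Theorem~\ref{thm:main1} with $n=2$. Then $d=\gcd(m,3)$, and the root has $b\in\{1,2\}$. If $d=3$, then $d\nmid b$ and the game is a P-position. If $d=1$, then $d\mid b$ and it is an N-position. Since $d=3$ exactly when $3\mid m$, this settles the even case.

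As an independent check, for $6\mid m$ I would exhibit the period-$6$ set
\[
S=\{(6k,1),(6k+2,1),(6k+3,2),(6k+5,2)\}.
\]
One verifies that $S$ is independent and that every vertex outside $S$ has exactly $0$ or $2$ neighbours in $S$ (e.g.\ $(6k+1,1)$ sees $(6k,1),(6k+2,1)$; $(6k+2,2)$ sees $(6k+2,1),(6k+3,2)$). So $S$ is an even kernel, as Theorem~\ref{EvenKernel} requires.

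\emph{Case $m$ odd, $3\mid m$.} Period $6$ does not divide $m$, and a quick check shows no period-$3$ even kernel exists. So I would still first search for a non-periodic even kernel, for instance by patching the period-$6$ pattern with a single period-$3$ block. If that fails, I would instead give the second player an explicit strategy. The natural candidate is mirroring under the reflection $\sigma:(i,j)\mapsto(-i,j)$, which fixes the root. Every edge is either swapped with its image or fixed by $\sigma$. The fixed edges are the rung at column $0$ and, since $m$ is odd, the horizontal edges $\{((m-1)/2,j),((m+1)/2,j)\}$ opposite the root. Pure mirroring breaks exactly when the token reaches one of these fixed edges. The hardest step is to handle these finitely many exceptional situations by local case analysis, and that analysis is where $3\mid m$ must enter. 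My first attempt would be to track the length of the path travelled along each row modulo $3$ when the token first reaches the antipodal column, and to show the responder can always re-establish symmetry or force a parity win.

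\emph{Case $m$ odd, $3\nmid m$.} Here I would show the game is an N-position by giving the first player a move after which the remaining game is a P-position. The first move would be either the rung $(0,1)\to(0,2)$ or a horizontal step. The responder's strategy in the resulting game would reuse the mirroring analysis above, with the residue of $m$ modulo $3$ now favouring the first player. I expect the odd-$m$ exceptional-edge bookkeeping to be the main obstacle throughout.
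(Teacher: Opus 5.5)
Your even case is fine and is a genuine shortcut over the paper. With $m=2m'$, $m'\ge 2$, Theorem~\ref{thm:main1} at $n=2$ gives $d=\gcd(m,3)$ and $b\in\{1,2\}$, which settles all even $m$ at once. The paper instead writes separate explicit kernels for $m\equiv 2,4\pmod 6$; your period-$6$ set for $6\mid m$ is the paper's up to a shift.

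The gap is the odd case, which is the real content of the theorem. Neither direction is proved, and both tools you propose provably fail.
\begin{itemize}
\item \emph{No even kernel exists.} $SP(m,2)$ has no even kernel at all unless $6\mid m$. Record each column as empty, bottom or top. The kernel conditions force an empty column to have two equal neighbouring columns, and a bottom (resp.\ top) column to have one top (resp.\ bottom) neighbour and one empty neighbour. Propagating around the cycle, any nonempty kernel is the pattern bottom, top, empty, top, bottom, empty, with exact period $6$. So no patching with a period-$3$ block can work.
\item \emph{Mirroring is not a strategy.} Both players push the same token. After Player~1 traverses $e$ from $v$ to $x$, the edge $\sigma(e)$ joins $v$ and $\sigma(x)$, so it is unavailable unless $x=\sigma(x)$.
\item \emph{Symmetry cannot be re-established later.} If the used edges (a trail from $v$ to the token $w$) form a $\sigma$-invariant set, then $\sigma$ permutes its odd-degree vertices $\{v,w\}$ and fixes $v$. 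This forces $w$ onto column $0$, which Player~2 cannot arrange after a horizontal first move.
\end{itemize}
So the symmetry breaks at the first move, not at finitely many exceptional edges. Your sketch for $3\nmid m$ reuses the mirroring, so it collapses as well.

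The missing idea is that the first part of Theorem~\ref{EvenKernel} holds for any simple graph. In particular it holds for $G$ minus the edges already traversed, and these deletions destroy the column regularity that rules out kernels. With the labels of Figure~\ref{fig:SP(m,2)}, the paper proceeds by a short move tree:
\begin{itemize}
\item For $m=6k+5$, Player~1 plays $1\to 2$. Then $\{1,2,\overline{3},\overline{5}\}\cup\{6i+6,6i+8,\overline{6i+9},\overline{6i+11} : 0\le i\le k-1\}$ is an even kernel of $G-(1,2)$ containing $2$.
\item For $m\equiv 1\pmod 6$, Player~1 plays $1\to 2$. Player~1 then answers $2\to\overline{2}$ with $\overline{2}\to\overline{1}$, and $2\to 3$ with $3\to 4$. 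Each time this lands in an explicit kernel of the edge-deleted graph.
\item For $m\equiv 3\pmod 6$ with $m>3$, Player~2 answers $1\to 2$ with $2\to 3$. Player~2 answers $1\to\overline{1}$ with $\overline{1}\to\overline{2}$; Player~1's two options $\overline{2}\to 2$ and $\overline{2}\to\overline{3}$ are then answered by $2\to 3$ and $\overline{3}\to\overline{4}$, again reaching kernels.
\item The case $m=3$ is settled by direct play.
\end{itemize}
A strategy tree of this kind, with verified kernels, is what your odd case needs.
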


\section{Main results}
\label{sec:1}

\subsection{Games on stacked prism graphs}

We first give the condition for the P-positions.

\begin{theorem}\label{thm:main1-1}
    For $m\geq 2$ and $n\geq 1$, let $G=SP(2m,n)$, $v=(a,b)\in V(G)$, $d'=\gcd(m,n+1)$, and $d=\gcd(2m,n+1)$.
    Then $(G,v)$ is a P-position if $d\nmid b$ and $d=d'$.
\end{theorem}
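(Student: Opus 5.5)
The plan is to build an explicit even kernel of $G=SP(2m,n)$ containing $v$ and then apply Theorem~\ref{EvenKernel}. The kernel will be assembled from copies of an even kernel of a small grid, glued together by reflection. First the hypotheses are used as follows. Since $d\nmid b$, we have $d\ge 2$. Since $d=\gcd(2m,n+1)$ and $d'=\gcd(m,n+1)$, the equality $d=d'$ means exactly that $d\mid m$, so $2d\mid 2m$. Rotations $(i,j)\mapsto(i+c,j)$ with column index mod $2m$ are automorphisms of $G$ preserving the row $b$. We may therefore assume $a\equiv 1\pmod d$, and in particular $d\nmid a$.

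Next, consider the grid $H=G_{(d-1)\times n}$ with columns $1,\dots,d-1$ and rows $1,\dots,n$. Its parameter is $\delta=\gcd(d,n+1)=d$, because $d\mid n+1$. The vertex $(1,b)$ satisfies $d\nmid 1$ and $d\nmid b$. So by Theorem~\ref{thm: grid}, $(H,(1,b))$ is a P-position. Since $H$ is bipartite, Theorem~\ref{EvenKernel} gives an even kernel $S_0$ of $H$ containing $(1,b)$. If $d=2$, then $H$ is a single column, and the same reasoning still applies.

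Now unfold $S_0$ along the columns. Define $S\subseteq V(G)$ block by block, where the $k$-th block consists of columns $kd+1,\dots,kd+d-1$. For even $k$, put $(kd+i,j)\in S$ iff $(i,j)\in S_0$. For odd $k$, put $(kd+i,j)\in S$ iff $(d-i,j)\in S_0$, which is the mirror image. Columns $\equiv 0\pmod d$ contain no vertex of $S$. The pattern has period $2d$ in the column index, and $2d\mid 2m$, so $S$ is well defined on the cycle direction. This is the only place $d=d'$ is used, and it is the crux of the construction. The assumption $a\equiv1\pmod d$ then puts $v=(a,b)$ into $S$ after the rotation.

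Finally, verify the even kernel conditions.

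1. $S$ is nonempty since $v\in S$.
2. For independence, any two adjacent vertices both outside the empty columns lie in the same block. That block is an isomorphic (possibly mirrored) copy of $H$, in which $S_0$ is independent.
3. Take $u\notin S$ in a non-empty column. Its neighbours in $S$ all lie in its own block, because its possible neighbour in an adjacent column $\equiv0$ is not in $S$. So the count equals the corresponding count in $H$ for $S_0$, which is even.
4. Take $u=(kd,j)$ in an empty column. Its vertical neighbours are not in $S$. Its horizontal neighbours $(kd-1,j)$ and $(kd+1,j)$ are mirror images across the seam, so both or neither lie in $S$. The count is therefore $0$ or $2$.

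Hence $S$ is an even kernel containing $v$, and Theorem~\ref{EvenKernel} shows that $(G,v)$ is a P-position. The main care needed is the seam check in item 4, together with confirming that the reflection pattern closes up consistently around the cycle, which needs $2d\mid 2m$.
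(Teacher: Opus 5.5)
Your proof is correct, and it builds the even kernel by a different route from the paper.

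\textbf{The paper's route.} The paper does not use the grid theorem. It writes down the explicit set $S=\{(i,j): d'\nmid j,\ i\equiv\pm j \pmod{2d'}\}$ and views it as the restriction of $A_+\triangle A_-$ on the cylinder extended to all rows. Even neighbour counts then follow because the neighbours of any vertex that lie in $A_+$ (resp.\ $A_-$) pair off, and rows $0$ and $n+1$ are empty because $d'\mid n+1$.

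\textbf{Your route.} You take an even kernel $S_0$ of the strip $G_{(d-1)\times n}$, obtained from Theorem~\ref{thm: grid} and the converse half of Theorem~\ref{EvenKernel}. You then glue mirrored copies of it across the empty columns $\equiv 0\pmod d$. Here $d\mid m$ enters as the statement that the number $2m/d$ of blocks is even. This shows clearly why the construction breaks when $d=2d'$: then $2m/d=m/d'$ is odd.

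\textbf{Comparison.} The paper's argument is self-contained and explicit. Yours is more modular, but it uses the grid theorem as a black box, including the path case $d=2$, which is easy to check by hand. The two constructions are closely related. The paper's $S$ is empty on columns $\equiv 0\pmod d$ and symmetric under $i\mapsto 2d-i$. So it is exactly the set your gluing produces from the particular strip kernel $\{(i,j): 1\le i\le d-1,\ d\nmid j,\ i\equiv\pm j\pmod{2d}\}$.

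\textbf{One small slip.} Reducing $a$ only modulo $d$ does not put $v$ into $S$. On an odd-indexed block, $(kd+1,b)\in S$ if and only if $(d-1,b)\in S_0$, and this can fail. For example, in $SP(6,2)$ with $b=1$ we have $d=3$, and the only even kernel of the $2\times 2$ grid containing $(1,1)$ is $S_0=\{(1,1),(2,2)\}$. Then $(4,1)\notin S$ even though $4\equiv 1\pmod 3$. Rotating so that $a=1$ (or $a\equiv 1\pmod{2d}$) fixes this.
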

\begin{proof}
    Let $v=(a,b).$
    Assume that $d\nmid b$ and $d=d'$.
    Let 
    $$S := \{(i,j) \in V(G) : d'\nmid j \text{ and } i\equiv\pm j \pmod{2d'}\}.$$
    We will show that $S$ is an even kernel for $G$.

    Extend the cylindrical grid to all integer rows, keeping the first coordinate modulo \(2m\). Define the extension of $S$ using symmetric difference as
    \[
    \qquad \widetilde S=A_+\triangle A_- \quad\text{where}\quad A_\pm=\{(i,j):i\equiv\pm j\pmod{2d'}\}.
    \]
    These congruences are well defined because \(d'\mid m\). Since \(j\equiv-j\pmod{2d'}\) exactly when \(d'\mid j\), the restriction of \(\widetilde S\) to \(V(G)\) is precisely \(S\).
    For any vertex of the extended grid, its neighbors in \(A_+\) occur in pairs: east with south, and west with north. Likewise, its neighbors in \(A_-\) pair east with north, and west with south. Consequently,
    \[
    |N(u)\cap\widetilde S|
    \equiv |N(u)\cap A_+|+|N(u)\cap A_-|
    \equiv0\pmod2.
    \]Moreover, \(\widetilde S\) contains no vertices in rows \(0\) and \(n+1\), since \(d'\mid n+1\). Restricting to \(G\) therefore preserves these neighbor counts.
    Every vertex of \(S\) has \(i+j\) even, so \(S\) is independent. Finally, the hypotheses imply \(d'\nmid b\), hence \(d'>1\) and \((1,1)\in S\). Thus \(S\) is a nonempty independent set with even neighbor counts, and hence an even kernel.
    Figure~\ref{fig:SP(12,8)} shows an example of $S$ for $SP(12,8)$.

    \begin{figure}[!ht]
    \centering
        \includegraphics[width=0.6\textwidth]{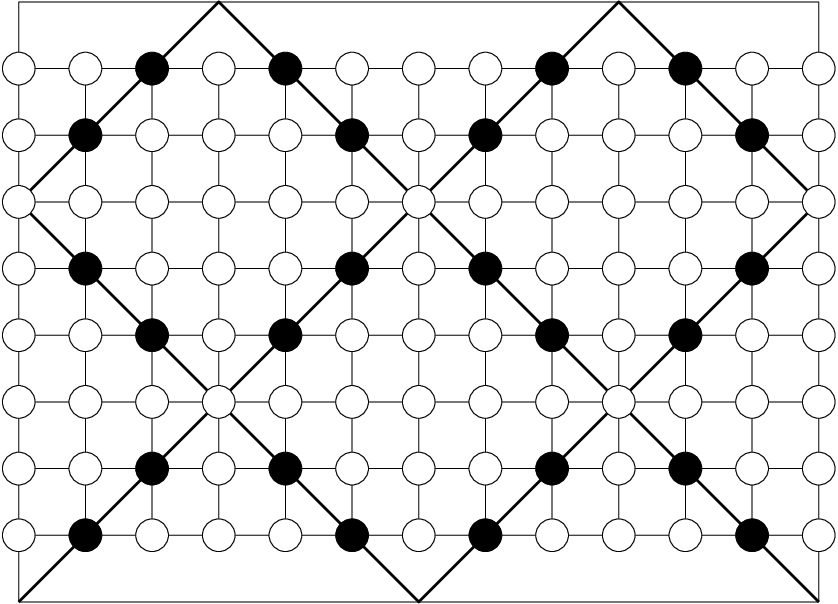}
        \caption{An even kernel for a graph $SP(12,8)$.}
    \label{fig:SP(12,8)}
    \end{figure}

    If $v\in S$, then $(G,v)$ is a P-position by Theorem~\ref{EvenKernel}.
    Otherwise, if $v\notin S$, there is a graph automorphism $f$ such that $f(a,b)=(a',b)\in S$.
    Then $f^{-1}(S)$ is an even kernel of $G$ containing $(a,b)$.
    By Theorem~\ref{EvenKernel}, $(G,v)$ is a P-position.
\end{proof}

\begin{remark}\label{jumping-trail}
    We can consider a set of vertices $S$ in Theorem~\ref{thm:main1-1} as a set of vertices that was visited once by a single ray going out from the bottom-left corner with $45^\circ$ angle to the side of the rectangle. 
    It reflects at the top and bottom sides of the rectangle.  
    But, it jumps from the right side ($i=m$) to the left side ($i=0$) keeping the same slope.
\end{remark}

When $d\neq d'$, the set $S$ in the proof of Theorem~\ref{thm:main1-1} may not be an even kernel.
We then have another construction in the following theorem.

\begin{theorem}\label{thm:main1-2}
    For $m\geq 2$ and $n\geq 1$, let $G=SP(2m,n)$, $v=(a,b)\in V(G)$, $d'=\gcd(m,n+1)$ and $d=\gcd(2m,n+1)$.
    Then $(G,v)$ is a P-position if $d\nmid b$ and $d\neq d'$.
\end{theorem}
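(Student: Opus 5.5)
The plan is to imitate the proof of Theorem~\ref{thm:main1-1}: build an even kernel as a symmetric difference of two families of diagonals on the cylinder extended to all integer rows, check that it is empty in rows $0$ and $n+1$, and then invoke Theorem~\ref{EvenKernel}. First I record what the hypothesis $d\neq d'$ gives. Since $d=\gcd(2m,n+1)$ and $d'=\gcd(m,n+1)$, we always have $d'\mid d\mid 2d'$. So $d\neq d'$ forces $d=2d'$. In particular $d$ is even, $d\mid 2m$ but $d\nmid m$, and $d\mid n+1$. The congruence class of $i$ modulo $2d$ is then \emph{not} well defined on the cylinder, since $2d\nmid 2m$. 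The class modulo $d$ is well defined, so I will work modulo $d$ instead of modulo $2d'$.

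Define on the extended cylinder
\[
A_\pm=\{(i,j): i\equiv \pm j \pmod d\},\qquad \widetilde S=A_+\triangle A_-, \qquad S_1=\widetilde S\cap V(G).
\]
The pairing argument from Theorem~\ref{thm:main1-1} goes through verbatim. For every vertex, its neighbours in $A_+$ pair up east with south and west with north, and those in $A_-$ pair up east with north and west with south. Hence every vertex has an even number of neighbours in $\widetilde S$. Since $d$ is even, every point of $\widetilde S$ has $i+j$ even, so $S_1$ is independent. Rows $0$ and $n+1$ are $\equiv 0\pmod d$, so there $A_+$ and $A_-$ coincide and cancel. Thus restricting to $G$ preserves all the neighbour counts. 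Now $A_+\cap A_-$ consists of the points with $2j\equiv 0\pmod d$, that is $\tfrac d2\mid j$. Therefore
\[
S_1=\{(i,j): \tfrac d2\nmid j,\ i\equiv \pm j \pmod d\},
\]
which is a nonempty even kernel (it contains $(1,1)$ as soon as $d>2$). Every row $b$ with $\tfrac d2\nmid b$ meets $S_1$. Using a rotation automorphism $(i,j)\mapsto(i+c,j)$, exactly as at the end of the proof of Theorem~\ref{thm:main1-1}, this settles every $v=(a,b)$ with $d'\nmid b$.

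The remaining case, and the main obstacle, is the rows with $b\equiv d'\pmod d$, i.e.\ $d'\mid b$ but $d\nmid b$. These rows are empty in $S_1$ and in all its horizontal translates, so a genuinely different kernel is needed. Following Remark~\ref{jumping-trail}, I would first try a single ``jumping'' ray that starts at a point of row $d'$ and has slope $\pm1$. It reflects at rows $0$ and $n+1$ and wraps around the cylinder. Because $2m/d$ is odd while $(n+1)/d$ is an integer, the ray should close up only after its phase modulo $d$ has been shifted by $d'$. This suggests taking
\[
S_2=B_+\triangle B_-,\qquad B_\pm=\{(i,j): i\equiv \pm j + d' \pmod d\}\ \text{restricted to suitable strips},
\]
or, alternatively, using diagonals modulo $2d$ on a double cover of the cylinder and pushing them down. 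The parity-of-neighbours check will again be the local pairing argument. The points to verify carefully are that the row-$0$ and row-$(n+1)$ contributions cancel after the phase shift, and that the set still lies on one colour class. If this direct construction fails, my fallback is to find the kernel for small cases such as $SP(4,1)$ and $SP(12,3)$, read off the pattern, and then verify it by the same pairing argument.
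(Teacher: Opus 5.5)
\textbf{Gap: the rows $b\equiv d'\pmod d$ are never reached.} Your kernel $S_1$ is correct. The pairing argument, the cancellation in rows $0$ and $n+1$, and independence all check out, so you do prove the statement for every $(a,b)$ with $d'\nmid b$.

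For the rows $b\equiv d'\pmod d$ you give no construction. When $d'=1$ (e.g.\ $SP(6,1)$, $SP(6,5)$), $S_1$ is empty and these are \emph{all} the rows in question, so nothing is proved there. Both of your suggestions provably fail:
\begin{enumerate}
\item $B_+\triangle B_-$ is just $S_1$ shifted by $d'$ columns, since $B_+$ and $B_-$ still coincide in every row with $2j\equiv 0\pmod d$.
\item The mod-$2d$ diagonals on the double cover push down to $S_1$ again. Here pushing down means counting lifts mod $2$, which is what preserves parities, and the result is $S_1$ because $2m\equiv d\pmod{2d}$.
\end{enumerate}
More generally, consider any symmetric difference of full diagonals $\{i-j\equiv c\}$ ($c\in C_+$) and $\{i+j\equiv c'\}$ ($c'\in C_-$), mod $2m$, that vanishes in rows $0$ and $n+1$. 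Row $0$ forces $C_+=C_-$, and row $n+1$ forces $C_++2(n+1)=C_+$. Hence $C_+$ is invariant under translation by $\gcd(2m,2(n+1))=d$, and then row $j$ is empty whenever $d\mid 2j$, i.e.\ whenever $d'\mid j$. Cutting diagonals into pieces instead creates endpoints with an odd number of neighbors in the set. So a new idea is needed, not a refinement of the line construction.

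\textbf{The missing idea: fill regions instead of using lines.} Your lines $i\pm j\equiv d'\pmod d$ are exactly the boundaries of the diamonds inscribed, vertices at side midpoints, in the $d\times d$ squares that tile $[0,2m]\times[0,n+1]$. This tiling is compatible with the cylinder since $d\mid 2m$ and $d\mid n+1$.

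Let $D$ be the set of vertices strictly inside these diamonds with $i+j\not\equiv d'\pmod 2$. All boundary vertices have $i+j\equiv d'$, and $D$ is independent. A vertex outside $D$ has:
\begin{enumerate}
\item $4$ neighbors in $D$ if it lies inside a diamond;
\item $2$ if it lies on a boundary (a tip gets one from each of the two diamonds meeting there, including across the seam $i=0\equiv 2m$);
\item $0$ otherwise.
\end{enumerate}
Only tips of diamonds touch rows $0$ and $n+1$, so nothing is lost there.

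This $D$ meets every row $j$ with $d\nmid j$. In particular it meets the middle row $j\equiv d'\pmod d$ of each diamond, e.g.\ at $(pd+1,qd+d')$. So this single kernel together with rotations handles all cases at once, and your $S_1$ is not needed. This is the same filled-region mechanism the paper later uses in Theorem~\ref{thm:main1-3}.
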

\begin{proof}
    Let $v=(a,b).$
    Suppose that $d\nmid b$ and $d\neq d'$.
    Then $d=2d'$.
    Now, we partition the rectangle $[0,2m]\times[0,n+1]$ into small $d\times d$ squares. In each square, it is possible to draw a diamond as described as dashed lines in Figure~\ref{fig:EvenKernelSP}.
    Let $R$ be the set of vertices of $G$ in the interior of any diamond.
    Let $D$ be the set of vertices $(i,j)\in R$ such that $i+j$ and $d'$ have different parities.
   
    Next, we will show that $D$ is an even kernel. It is easy to see that $D$ is an independent set.
    To show that any vertices not in $D$ have an even number of neighbors in $D$, we consider $u\notin D$.
    \begin{itemize}
        \item If $u$ is in $R$, then it has $4$ neighbors in $D$.
        \item If $u$ is on the boundary of a diamond, then it has $2$ neighbors in $D$.
        \item Otherwise, it has no neighbor in $D$.
    \end{itemize} 
    So, $D$ is an even kernel.
    Note that for all $j$ where $d\nmid j$, there exists a vertex $(i,j)$ in $D$. 

    If $v\in D$, we are done.
    If $v\notin D$, there is an automorphism $f$ on $G$ that $f(a,b)=(a',b)\in D$.  
    Similar to the proof of Theorem~\ref{thm:main1-1}, $(G,v)$ is a P-position.
    \begin{figure}[!ht]
    \centering
        \includegraphics[width=0.5\textwidth]{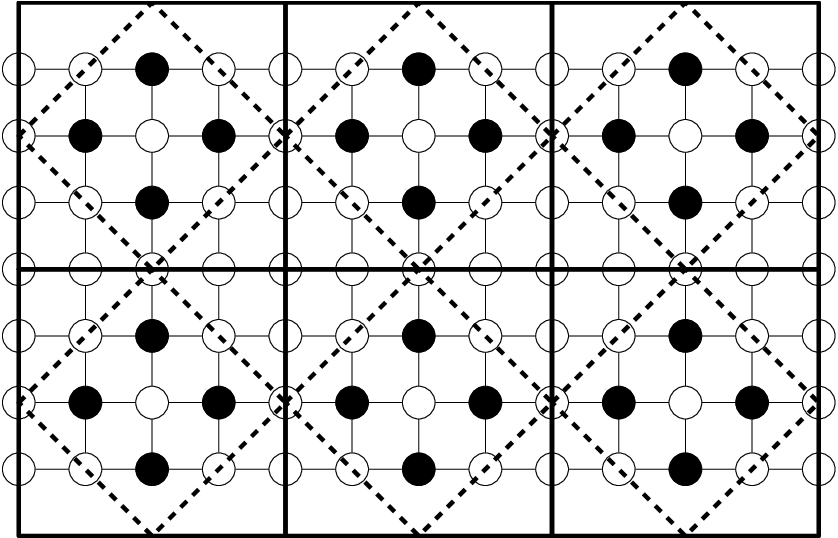}
        \caption{An even kernel for a graph $SP(12,7)$.}
    \label{fig:EvenKernelSP}
    \end{figure}
\end{proof}

From Theorems~\ref{thm:main1-1} and \ref{thm:main1-2}, we can conclude that
$(G,v)$ is a P-position if $d\nmid b$.
Next, we will show that $(G,v)$ is an N-position if $d\mid b$.

From now on, we consider only the right half $[m,2m]\times[0,n+1]$ of the whole rectangle $[0,2m]\times[0,n+1]$. 
In this rectangle, we define a \emph{diagonal trail} as a single ray going out from a corner with $45^\circ$ angle to the side of the rectangle. It reflects at all sides of the rectangle. 

From the proof of Lemma~4 in \cite{sereekiatdilok2026undirected}, we have the following lemma.

\begin{lemma}\label{lem:fulltrail}
    Let $T := \{(i,j) \in\{m,\dots,2m\}\times\{1,\dots,n\} : i\pm j\equiv m \pmod{2d'}\}$ and $d'=\gcd(m,n+1)$. 
    In the rectangle $[m,2m]\times[0,n+1]$ where rays reflect off the boundary of the rectangle, the diagonal trail starting at $(m,0)$ making $45^\circ$ to the bottom side visits every vertex in $T$ and ends at another corner of the rectangle. 
\end{lemma}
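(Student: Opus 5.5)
The plan is to track the ray explicitly as a lattice walk and read off which lattice points it passes through. Shift coordinates so the rectangle becomes $[0,m]\times[0,n+1]$ with the start at the origin, writing $x=i-m$ and $y=j$; then $T$ becomes the set of interior-row points with $x\pm y\equiv 0\pmod{2d'}$. We use the standard unfolding trick. A billiard ray reflecting off the sides of an $m\times(n+1)$ rectangle corresponds to a straight line $y=x$ in the plane. The plane is tiled by reflected copies of the rectangle, and the folding map $\phi$ sends $(t,t)$ to $(\rho_m(t),\rho_{n+1}(t))$. Here $\rho_L(t)$ is the distance from $t$ to the nearest multiple of $2L$; this is the triangle-wave folding.

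The first step is to find the endpoint. The unfolded line first meets a point with both coordinates multiples of the side lengths at $t=\operatorname{lcm}(m,n+1)$. That point is a corner of some reflected copy, and it folds back to a corner of the original rectangle. That corner is different from $(0,0)$ unless both $m/d'$ and $(n+1)/d'$ are even, which is impossible because they are coprime. So the trail terminates at another corner without retracing, and it consists of the lattice points $\phi(t,t)$ for $0\le t\le L:=\operatorname{lcm}(m,n+1)$.

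The second step characterizes the visited points. A point $(x,y)$ with $0\le x\le m$ and $1\le y\le n$ is visited iff there is $t\in[0,L]$ with $t\equiv\pm x\pmod{2m}$ and $t\equiv\pm y\pmod{2(n+1)}$. For a fixed choice of signs, the Chinese remainder theorem says these congruences are solvable iff $\pm x\equiv\pm y\pmod{\gcd(2m,2(n+1))}$, that is, iff $x\mp y\equiv 0\pmod{2d'}$. This is exactly membership in $T$, so every visited point lies in $T$. Conversely, if $(x,y)\in T$, a solution $t$ exists modulo $\operatorname{lcm}(2m,2(n+1))=2L$. Replacing $t$ by $2L-t$ flips both signs at once, and the conditions stay symmetric under that flip. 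Hence we may choose a representative $t\in[0,L]$, which shows every point of $T$ is visited.

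The main obstacle is the bookkeeping of the sign choices and of the range $[0,2L]$ versus $[0,L]$. One also has to confirm that the trail is a single simple trail, visiting $T$ and stopping only at the corner. This amounts to checking that $\phi$ restricted to $[0,L]$ hits each point of $T$ and that no corner is reached before time $L$. Both follow from $L$ being the first common multiple. Since the paper cites Lemma~4 of \cite{sereekiatdilok2026undirected}, an alternative is to translate directly: that lemma concerns the grid $G_{(m-1)\times n}$ with $\delta=\gcd(m,n+1)=d'$, whose diagonal trail has exactly this description. So it would suffice to match coordinates with that proof.
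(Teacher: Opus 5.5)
Your proof is correct. The arithmetic underneath is the same as the paper's, but your argument is self-contained.

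The paper does not do the unfolding itself. It imports from the proof of Lemma~4 of \cite{sereekiatdilok2026undirected} the criterion that $(i,j)$ is visited if $(2am\pm i)-m=2b(n+1)\pm j$ for some $a,b\in\mathbb{Z}$. It then uses B\'ezout, $d'=a'm+b'(n+1)$, to write down such $a,b$ for each point of $T$. For the endpoint it argues by contradiction: if none of $(m,n+1)$, $(2m,0)$, $(2m,n+1)$ satisfied the congruence, one would get $m\equiv d'$ and $n+1\equiv 0\pmod{2d'}$, which forces $(m,n+1)$ to satisfy it after all.

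You instead derive the criterion from the triangle-wave folding and solve it with the generalized CRT modulo $\gcd(2m,2(n+1))=2d'$, which replaces the B\'ezout step. You locate the endpoint by noting that corners occur exactly at multiples of $L=\operatorname{lcm}(m,n+1)$, and that $m/d'$ and $(n+1)/d'$ are coprime, so not both even.

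Your route gains precision on three points the paper leaves implicit:
\begin{itemize}
\item You get the reverse inclusion: every visited interior point lies in $T$.
\item Your $t\mapsto 2L-t$ symmetry shows each point of $T$ is reached at some $0\le t\le L$, i.e.\ before the trail stops at its corner. The $(a,b)$ criterion as quoted describes the whole unfolded line, and reaching $(m,b)$ before the trail ends is exactly what Theorem~\ref{thm:trail} needs.
\item You identify the end corner as the first corner reached, rather than only showing that some other corner lies on the line.
\end{itemize}
The paper's route is shorter only because it delegates the geometry to the earlier paper.

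Your no-retracing remark is true: two traversals of the same unit segment at distinct times $s,t\in[0,L)$ would force $s\equiv t$ or $s+t+1\equiv 0\pmod{2L}$. But you only gesture at it, and it is not needed for the lemma. Also, ``simple'' must be read as edge-simple, since the trail passes twice through the vertices where it crosses itself.
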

\begin{proof}
    Consider the rectangle $[m,2m] \times [0,n+1]$ where a trail reflects off the boundary.
    From the proof of Lemma~4 in \cite{sereekiatdilok2026undirected}, the $45^\circ$  diagonal trail at $(m,0)$ visits $(i,j)$ if there is $a,b\in \mathbb{Z}$ such that $(2am\pm i)-m = 2b(n+1) \pm j$, where the two $\pm$ are independent. This reduces to $i \pm j = m + 2b(n+1)-2am$. 
    
    Let $(i,j)\in T$. We have $i\pm j \equiv m \pmod{2d'}$. Since $d'=\gcd(m,n+1)$, there is $a',b'\in\mathbb{Z}$ such that $d' = a'm + b'(n+1)$. This gives $i\pm j = m + 2ka'm + 2kb'(n+1)$ for some $k\in\mathbb{Z}$. So we can pick $a=-ka'$ and $b=kb'$. Hence, $i \pm j = m + 2b(n+1)-2am$ as desired.

    Assume that the trail visits only the corner $(m,0)$, i.e., none of $(m,n+1)$, $(2m,0)$ or $(2m,n+1)$ are in $T$.
    Then all of the following hold.
    \begin{align}
        m+n+1 &\not\equiv m \pmod{2d'}. \label{Eq1}\\
        2m &\not\equiv m \pmod{2d'}. \label{Eq2}\\
        2m+n+1 &\not\equiv m \pmod{2d'}. \label{Eq3}
    \end{align} 
    Since $d'\mid m$ and by Equation~\eqref{Eq2}, we have $m \equiv d' \pmod{2d'}$.
    By Equation~\eqref{Eq3}, $n+1 \not\equiv d' \pmod{ 2d'}$. Since $d'\mid n+1$, we also have $n+1 \equiv 0 \pmod{2d'}$.
    Thus, $m+n+1 \equiv d' \equiv m \pmod{2d'}$, a contradiction to Equation~\eqref{Eq1}.
    So the trail visits a corner $(m,n+1)$ or $(2m,0)$ or $(2m,n+1)$.
\end{proof}

\begin{theorem}\label{thm:trail}
    For $m\geq 2$ and $n\geq 1$, let $d = \gcd(2m,n+1)$ and $d\mid b$.
    There is a diagonal trail $\tau'$ on a rectangle $[m,2m] \times [0,n+1]$ that starts at a corner vertex $(m,0)$ or $(m,n+1)$, reflects at any side of the rectangle, and visits a vertex $(m,b)$.    
\end{theorem}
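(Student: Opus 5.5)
The plan is to reduce everything to Lemma~\ref{lem:fulltrail} together with the reflection symmetry $j\mapsto n+1-j$ of the rectangle $[m,2m]\times[0,n+1]$. Write $d'=\gcd(m,n+1)$. Since $d=\gcd(2m,n+1)$, either $d=d'$ or $d=2d'$. Throughout, $(m,b)$ is a vertex of $G$, so $1\le b\le n$.

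\emph{Trail from $(m,0)$.} Lemma~\ref{lem:fulltrail} says this trail visits every vertex of $T$. A vertex $(m,j)$ with $1\le j\le n$ lies in $T$ exactly when $m\pm j\equiv m\pmod{2d'}$, that is, when $2d'\mid j$. So $(m,b)$ is visited by this trail whenever $2d'\mid b$.

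\emph{Trail from $(m,n+1)$.} Reflect the rectangle by $(i,j)\mapsto(i,n+1-j)$. This maps the trail from $(m,0)$ onto the $45^\circ$ trail from $(m,n+1)$, because reflection preserves $45^\circ$ rays and their reflections at the sides. Applying the previous step to the reflected picture, the trail from $(m,n+1)$ visits $(m,b)$ whenever $2d'\mid n+1-b$.

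\emph{Case analysis.} It remains to show that $d\mid b$ forces $2d'\mid b$ or $2d'\mid n+1-b$.
\begin{itemize}
\item If $d=2d'$, then $d\mid b$ gives $2d'\mid b$ directly, and the trail from $(m,0)$ works.
\item If $d=d'$, then $d'\mid b$. If moreover $2d'\mid b$, we are done as before. Otherwise $b\equiv d'\pmod{2d'}$.
\end{itemize}
In this last subcase I will show $n+1\equiv d'\pmod{2d'}$. We know $d'\mid n+1$. Suppose instead $2d'\mid n+1$. Since $d'\mid m$, we also have $2d'\mid 2m$, so $2d'\mid\gcd(2m,n+1)=d=d'$, which is impossible. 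Hence $n+1\equiv d'\equiv b\pmod{2d'}$, so $2d'\mid n+1-b$, and the trail from $(m,n+1)$ visits $(m,b)$.

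The only step needing care is the reflection argument in the second paragraph: one must check that the image of the trail from $(m,0)$ is exactly the diagonal trail from $(m,n+1)$, including its bounces and its endpoint. This holds because the reflection is an isometry of the rectangle that fixes the set of sides. The parity argument itself is routine.
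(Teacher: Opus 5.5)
Your proof is correct and follows the paper's argument essentially step for step. You use the same split into $d=2d'$ versus $d=d'$ (with subcases $b\equiv 0$ and $b\equiv d'\pmod{2d'}$), the same parity argument giving $n+1\equiv d'\pmod{2d'}$, and the same application of Lemma~\ref{lem:fulltrail} under the vertical reflection $j\mapsto n+1-j$ for the trail from $(m,n+1)$. Your condition $2d'\mid n+1-b$ is simply the paper's reflected set $\{(i,j): i\pm j\equiv m+d'\pmod{2d'}\}$ specialized to $i=m$.
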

\begin{proof}
    Let $d'=\gcd(m,n+1)$.
    Then $d'\mid 0$ and $d'\mid b$.
    Let $T$ be the set of vertices obtained from Lemma~\ref{lem:fulltrail}.
    
    If $d\neq d'$, then $d =2d'$. Thus $(m,b)\in T$ and we get $\tau'$ as a subtrail of the diagonal trail starting at $(m,0)$ and visiting $(m,b)$.
    
    If $d = d'$, then we have two cases: $b\equiv 0 \pmod{2d'}$ and $b\equiv d' \pmod{2d'}$.
    For $b\equiv 0 \pmod{2d'}$, we have $(m,b)\in T$ and we are done.
    On the other hand, if $b\equiv d' \pmod{2d'}$, then we let $n' = \frac{n+1}{d'}$.
    We will show that $n'$ is odd.
    Since $d=d'$, we have $\frac{2m}{d}=\frac{2m}{d'}= 2\cdot\frac{m}{d'}$ is even.
    If $n'=\frac{n+1}{d'}=\frac{n+1}{d}$ is even, then $d = \gcd(2m,n+1)\geq 2d$, which is a contradiction. 
    So $n'$ is odd, that is, $n+1\equiv d' \pmod{2d'}$.
    We consider a trail $\tau'$ that starts at $(m,n+1)$. 
    From Lemma~\ref{lem:fulltrail} with vertical reflection, the trail $\tau'$ visits every vertex in $\{(i,j) \in\{m,\dots,2m\}\times\{1,\dots,n\} : i\pm j\equiv m+d' \pmod{2d'}\}$.
    Since $b\equiv d' \pmod{2d'}$, a trail $\tau'$ visits a vertex $(m,b)$.
    
    Hence there is a trail $\tau'$ starting at $(m,0)$ or $(m,n+1)$ and visiting $(m,b)$.
\end{proof}

The trail $\tau'$ in Theorem~\ref{thm:trail} divides the rectangles into disjoint regions. We would like to label them with \emph{Positive} label and \emph{Negative} label so that any adjacent regions must have distinct labels. We call it a \emph{proper} labeling.

\begin{theorem}
Consider the regions obtained from a trail $\tau'$ in Theorem~\ref{thm:trail} starting from $(m,0)$ or $(m,n+1)$ and ending at $(m,b)$. There is a proper labeling $L'$ such that all regions adjacent to the top, bottom, or right boundary have the Negative label. 
\end{theorem}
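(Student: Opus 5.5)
The plan is to build the labeling from a parity count of crossings. A point in a region is labeled by how many segments of $\tau'$ separate it from the outer boundary. To make this precise, fix a point $p$ in a region. Choose a generic path (avoiding lattice points and the corners where $\tau'$ reflects) from $p$ to a point just inside the right side near $(2m, \tfrac{n+1}{2})$, or more conveniently to any point on the top, bottom or right boundary. Let $c(p)$ be the number of times the path crosses $\tau'$ modulo $2$. Set $L'(p)=$ Negative if $c(p)=0$ and Positive otherwise.

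The order of steps is as follows.

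First, observe that $\tau'$ is a finite union of straight segments of slope $\pm1$ inside $[m,2m]\times[0,n+1]$. Its endpoints are the starting corner $(m,0)$ or $(m,n+1)$ and the point $(m,b)$. Both endpoints lie on the left side $i=m$, and every reflection happens on the boundary.

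Second, prove that $c(p)$ is well defined. It suffices to show that every closed generic loop in the rectangle crosses $\tau'$ an even number of times, and that crossing parity does not depend on which point of the top, bottom or right boundary we aim for. For closed loops, use the standard argument that a curve whose endpoints lie on the boundary, or a closed curve, meets a closed loop in the open disk an even number of times generically. This holds because $\tau'$, together with boundary arcs, closes up. Concretely, join $(m,b)$ back to the starting corner along the left side $i=m$. This makes $\tau'$ a closed curve $\gamma$ in the plane, whose intersection number with any closed loop is even; $\gamma$ may self-cross, and we use the mod $2$ intersection number, which is a homology invariant. Loops inside the open rectangle do not meet the added left segment, so their crossings with $\tau'$ are even.

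Third, handle the boundary. A path running along (just inside) the top, right and bottom sides crosses $\tau'$ only near reflection points. At a reflection point the trail touches the side and turns back, so a path passing just inside crosses both incident segments, an even number. Hence all points adjacent to the top, bottom or right boundary receive the same parity, namely $0$. This gives the Negative label on all regions adjacent to those sides.

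Finally, check properness and that the labeling is constant on regions. Moving within a region crosses nothing, so the label is constant. Two regions adjacent along a segment of $\tau'$ differ by exactly one crossing, so their labels differ. If the trail overlaps itself along a segment, traversed twice, I would treat adjacency as across segments traversed an odd number of times. Otherwise I would argue that the diagonal trail never retraces a segment: retracing would force it to reverse at a non-corner point, which reflections do not do, and reaching a corner ends it.

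The main obstacle is the third step. Specifically, it must be certain that reflections at the top, bottom and right sides contribute even crossings, and that the left side, where the endpoints lie, is correctly excluded. Degenerate cases also need checking, such as $\tau'$ passing through a corner on the right side, where I would use the same local two-segment argument.
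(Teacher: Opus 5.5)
Your proposal is correct, and it produces the same labeling as the paper by a different route.

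The paper argues by induction on the number of times $\tau'$ touches the left side. For a single excursion, it labels Positive the regions enclosed between the trail and the left side. In general, it cuts $\tau'$ at the last left-side visit $(m,b')$ before $(m,b)$ and labels the regions of $\tau''$ and of $\tau'-\tau''$ separately. It then combines the two labelings by an XOR rule (Negative when they agree) and checks properness segment by segment.

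You instead define the label globally as the parity of crossings with $\tau'$. You prove well-definedness once, by closing $\tau'$ along the left side and using that a closed curve meets a generic closed loop in the open rectangle an even number of times. Crossing counts are additive over pieces of the trail, so the paper's XOR labeling is exactly your parity function. The difference lies only in the justification.

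Your argument buys three things:
\begin{itemize}
\item It needs no separate base case. The paper's base case rests on the figure; it is valid because one excursion is a rightward zigzag followed by a leftward zigzag, so every point has winding number $0$ or $\pm 1$ with respect to the closed curve.
\item It absorbs self-crossings of $\tau'$ automatically.
\item It makes explicit that $\tau'$ never traverses a segment twice. The paper's properness check also uses this silently, since a segment must not lie in both $\tau''$ and $\tau'-\tau''$.
\end{itemize}
The paper's induction is more elementary and constructive and matches its pictures, but it leaves these points implicit.

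Two minor tidy-ups for your write-up. First, repeating a segment in the same direction is not a ``reversal.'' It is still excluded because the reflection rule is reversible, so a repeated state would force the trail back to its starting corner. Second, the degenerate case of $\tau'$ passing through $(2m,0)$ or $(2m,n+1)$ does not arise. The diagonal trail stops at a corner (Lemma~\ref{lem:fulltrail}), and $(m,b)$ with $1\le b\le n$ is not a corner.
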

\begin{proof}
    Without loss of generality, we assume that $\tau'$ starts from $(m,0)$.
    If $(m,b)$ is the first vertex on the left boundary that $\tau'$ visits, then we label the regions bounded by $\tau'$ with Positive label, as shown in Figure~\ref{fig:Labeling}.
    \begin{figure}[!ht]
    \centering
        \includegraphics[width=0.4\textwidth]{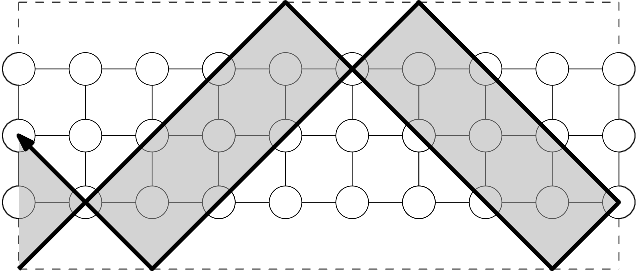}
    \caption{A labeling of regions of the rectangle $[9,18]\times[0,4]$.}
    \label{fig:Labeling}
    \end{figure}
    
    If $(m,b)$ is not the first vertex on the left boundary that $\tau'$ visits, then we will prove the statement by induction.
    Consider the last vertex $(m,b')$ on the left boundary where $\tau'$ visits before a vertex $(m,b)$. 
    We call the subtrail from $(m,0)$ to $(m,b')$ as $\tau''$.
    Let $L''$ be the labeling of the regions obtained from $\tau''$.
    Assume that $L''$ is a proper labeling that satisfies the condition of the theorem.
    
    Note that the labeling $L'''$ of the regions obtained from the trail $\tau'-\tau''$ also satisfies the theorem, by using the idea of the base case.
    We label the regions obtained from $\tau'$ by using $L''$ and $L'''$ as follows.
    \begin{enumerate}
        \item[$\bullet$] If a region get the labels given by $L''$ and $L'''$ are same, we label this region with Negative label.
        \item[$\bullet$] Otherwise, we label the region with Positive label.
    \end{enumerate}
    See Figure~\ref{fig:Labeling2} for an example.

    \begin{figure}[!ht]
    \centering
        \includegraphics[width=0.6\textwidth]{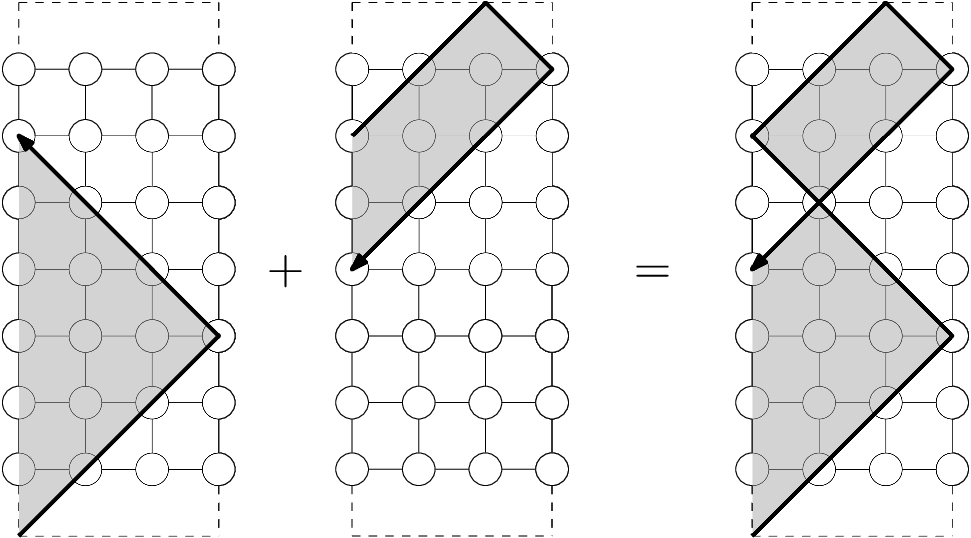}
    \caption{A labeling $L'$ obtained from $L''$ and $L'''$.}
    \label{fig:Labeling2}
    \end{figure}
    
    Since the labeling $L''$ and $L'''$ assign Negative label to all regions adjacent to top, bottom or right boundary, the label $L'$ also satisfies this condition.

    To prove that any two adjacent regions have distinct labels by $L'$, we consider a segment shared by the two regions.
    If it is in $\tau''$, the two regions separated by the segment get different labels by $L''$. On the other hand, the two regions must get the same labels by $L'''$. By the definition of $L'$, they get different labels.
    Similarly, if the segment is in $\tau'-\tau''$, the two regions separated by the segment get the same label by $L''$ but get different labels by $L'''$. So, they get different labels by $L'$.
    
    Therefore, we have a proper labeling $L'$ such that regions on the boundaries other than the left side are negative. 
\end{proof}

\begin{remark}\label{rem:Labeling}
    We can reflect the trail $\tau'$ and the labels of all the regions of $[m,2m]\times [0,n+1]$, by the line $x=m$, to the rectangle $[0,m]\times [0,n+1]$. As a result, we get a proper labeling $L$ on the regions of the full rectangle $[0,2m]\times [0,n+1]$ such that all regions on the boundaries get Negative label, as shown in Figure~\ref{fig:Labeling3}.
    \begin{figure}[!ht]
    \centering
        \includegraphics[width=0.3\textwidth]{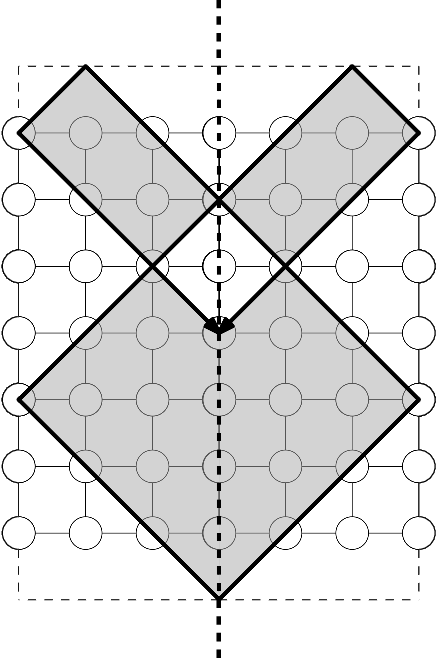}
    \caption{The labeling $L$ obtained from $L'$ by reflection.}
    \label{fig:Labeling3}
    \end{figure}
\end{remark}
    
\begin{theorem}\label{thm:main1-3}
    For $m\geq 2$ and $n\geq 1$, let $G=SP(2m,n)$, $v=(a,b)\in V(G)$, and $d=\gcd(2m,n+1)$.
    Then $(G,v)$ is an N-position if $d\mid b$.
\end{theorem}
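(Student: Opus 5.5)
By the symmetry $(a,b)\mapsto(a+k,b)$ of $SP(2m,n)$ it suffices to treat $v=(m,b)$ with $d\mid b$. Since $SP(2m,n)$ is bipartite, Theorem~\ref{EvenKernel} says $(G,v)$ is an N-position exactly when $v$ lies in no even kernel. I would argue by contradiction: assume $D$ is an even kernel with $v\in D$, and use the proper labeling $L$ of Remark~\ref{rem:Labeling} to derive a parity contradiction.

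The mechanism is the one from the grid paper. Let $P$ be the union of the Positive regions of $L$; it is symmetric under reflection in $x=m$. Because every region touching the outer boundary is Negative, $P$ lies strictly inside $[0,2m]\times[0,n+1]$. Its boundary therefore consists of segments of the trail $\tau'$ and of its mirror image, and these are diagonal lines at $45^\circ$ passing through lattice points. I would count, over all vertices $u$ strictly inside $P$ and not in $D$, the quantity $\sum |N(u)\cap D|$; each term is even. Equivalently, I would count pairs (edge, endpoint in $D$) for edges with both endpoints in the closed region.

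The counting proceeds as follows.
\begin{enumerate}
\item A diagonal boundary line cuts the grid so that each vertex on the line has two of its edges pointing into $P$. Each vertex just inside has either two neighbours on the line or none across it.
\item Summing the even-kernel condition over the interior vertices of $P$ that are outside $D$ gives, modulo $2$, the number of edges from interior non-$D$ vertices to $D$-vertices.
\item Computing the same number by summing over the $D$-vertices, each interior $D$-vertex contributes $4\equiv 0$. Each $D$-vertex on a boundary diagonal contributes $2$, except at the corners or turning points of $P$.
\item Only the turning points can contribute $1$. These are the points where $\tau'$ or its mirror image meets the line $x=m$, together with the trail's final point, which is $v=(m,b)$.
\end{enumerate}
Arranged correctly, everything should reduce to the parity of a count on the line $x=m$. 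The reflection symmetry pairs up all turning points except the endpoint $v$, where the trail and its mirror meet at a $90^\circ$ corner. That corner should contribute an odd count, giving the contradiction.

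The main obstacle is making this parity count rigorous. One has to check the local contributions at the reflection points on the top and bottom sides. Those points are excluded automatically, since the regions there are Negative. The harder part is the intermediate points $(m,b')$, where the trail and its mirror touch the left side of the right half and the inductive labeling $L''\triangle L'''$ may leave $P$ locally on both sides.

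The first thing I would try is to work with the trail edges directly rather than the regions. Consider the set $E$ of grid vertices on $\tau'\cup$mirror, with $D$-membership tested along it. Mirroring the even-kernel argument in the proof of Theorem~\ref{thm:main1-1} (pairing east with south and so on), I would show that along the closed curve formed by $\tau'$ and its mirror, membership in $D$ must alternate consistently. This should force $v\notin D$, because $v$ is the unique point where the curve has a corner at a vertex adjacent to an odd number of curve vertices.

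If that fails, the fallback is to give an explicit strategy for the first player: move along $\tau'$ and its mirror, using the labeling to answer each opponent move by a pairing argument, as in Theorem~\ref{thm: grid}.
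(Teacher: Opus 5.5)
\textbf{There is a genuine gap: the parity count, which you yourself leave open, does not close as you set it up.} Your overall frame is legitimate. Since $SP(2m,n)$ is bipartite, the ``only if'' half of Theorem~\ref{EvenKernel} reduces the claim to showing that no even kernel $D$ contains $v=(m,b)$. Double counting edges between $D$ and a test set built from $L$ is the right kind of tool.

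The trouble is the choice of test set. You test against the set $I$ of \emph{all} vertices strictly inside Positive regions. Then a vertex $x\in D\cap I$ contributes $|N(x)\cap I|$, and this is not $4$ in general; it can be odd. Two examples:
\begin{itemize}
\item The vertex just east of an X-crossing of two trail lines typically has three neighbours on the trail and only one inside its wedge, so it contributes $1$.
\item The vertex just inside the corner at $v$ behaves the same way.
\end{itemize}
These vertices are off the trail, so they are not among your ``turning points''. Because $D$ is an arbitrary even kernel, their odd contributions are uncontrolled. Reflection symmetry cannot rescue this: $D$ need not be symmetric, and the points on $x=m$ are fixed by the reflection rather than paired.

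\textbf{The missing idea is to restrict to one colour class.} Every lattice point on $\tau'$ or its mirror satisfies $i+j\equiv m+b \pmod 2$. Hence the trail never cuts an edge, and every vertex of the other colour lies in the interior of a region. Take
\[
S=\{(i,j)\text{ in Positive regions}: i+j\not\equiv m+b \pmod 2\},
\]
which is exactly the paper's set. Then every $x\neq v$ has $|N(x)\cap S|\in\{0,2,4\}$, while $v$ has $1$ or $3$.
\begin{itemize}
\item A vertex inside a Positive region gets $4$; one inside a Negative region gets $0$.
\item A vertex on a straight trail segment gets $2$.
\item At an X-crossing, properness forces opposite wedges to carry equal labels, so the count is again $2$. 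This covers the intermediate points $(m,b')$ that worried you, and also the points on $x=0\equiv 2m$.
\item Rows $0$ and $n+1$ are harmless, because their off-trail vertices lie in Negative regions.
\end{itemize}

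With this $S$ your contradiction is one line. We have $\sum_{x\in D}|N(x)\cap S|=\sum_{y\in S}|N(y)\cap D|$. The right side is even, and the left side is $\equiv 1$ because $v\in D$. Equivalently, you could replace $D$ by its intersection with the colour class of $v$, which is again an even kernel in a bipartite graph.

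The paper uses the same $S$ constructively instead. For a neighbour $u\in S$ of $v$, $S$ is an even kernel of $G-uv$ containing $u$, so moving $v\to u$ wins. That route needs only the easy direction of Theorem~\ref{EvenKernel}.

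Your ``alternation along the curve'' idea and your fallback strategy are too vague to assess.
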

\begin{proof}
    Let $d'=\gcd(m,n+1)$ and $d\mid b$.
    To simplify the proof, we consider the game at a vertex $v=(m,b)$.
    Let $u$ be a neighbor of $v$ in a positive region. Define $S$ to be the set of vertices $(i,j)$ in positive regions of the labeling $L$ in Remark~\ref{rem:Labeling} for which $i+j$ and $m+b$ have different parity. Then $u\in S$.
    We show that $S$ is an even kernel of $G-uv$.
    
    Since all vertices in $S$ have the same parity, $S$ is an independent set.
    Let $x\notin S$.
    Then we will show the number of its neighbors in $S$ is even, in different cases as follows.
    \begin{enumerate}
        \item[$\bullet$] If $x$ is in a positive region, then it has $4$ neighbors in $S$.
        \item[$\bullet$] If $x$ is in a negative region, then it has no neighbors in $S$.
        \item[$\bullet$] If $x\neq v$ is on a segment separating regions, then it has $2$ neighbors in $S$.
        \item[$\bullet$] If $x=v$, then it has one or three neighbors in $S$ in $G$, one of which is $u$. Hence it has $0$ or $2$ neighbors in $S$ in $G-uv$.
    \end{enumerate}
    Thus, $S$ is an even kernel of $G-uv$ containing $u$. After Player 1 moves from $v$ to $u$ and deletes $uv$, the resulting position $(G-uv,u)$ is a P-position by Theorem~\ref{EvenKernel}.
    Therefore, Player 1 has a winning strategy on $(G,v)$.
    Hence, $(G,v)$ is an N-position.
\end{proof}

From Theorems~\ref{thm:main1-1}, \ref{thm:main1-2} and \ref{thm:main1-3}, we immediately get Theorem~\ref{thm:main1}.

\subsection{Games on prism graphs}

Since $SP(2m,n)$ is a bipartite graph, by Theorem~\ref{EvenKernel}, we can ensure that there exists an even kernel for determining a position for this graph.
However, for the case of $SP(m,n)$ where $m$ is odd, the graph is not bipartite. So it might not be possible to find an even kernel even though it is a P-position, which makes it not easy to determine the position of the game.

In this section, we consider the game on (stacked) prism graphs $SP(m,2):=C_m\square P_2$ for $m\geq 3$.
To simplify the proof, we draw a graph $SP(m,2)$ and label vertices as shown in Figure~\ref{fig:SP(m,2)}.
Since the graph is vertex-transitive, we can consider only the game that starts at Vertex~$1$.  

\begin{figure}[!ht]
    \centering
    \begin{tikzpicture}[x=0.7cm,y=0.6cm]
        \coordinate (t1) at (0,1.6);
        \coordinate (t2) at (2.5,1.6);
        \coordinate (t3) at (5,1.6);
        \coordinate (tm) at (10,1.6);
        \coordinate (b1) at (0,0);
        \coordinate (b2) at (2.5,0);
        \coordinate (b3) at (5,0);
        \coordinate (bm) at (10,0);
        \draw (t1) to[out=80,in=100,looseness=0.5] (tm);
        \draw (b1) to[out=-80,in=-100,looseness=0.5] (bm);
        \draw (t1) -- (t2) -- (t3) -- (6.2,1.6);
        \draw (7.8,1.6) -- (tm);
        \draw (b1) -- (b2) -- (b3) -- (6.2,0);
        \draw (7.8,0) -- (bm);
        \draw (t1) -- (b1);
        \draw (t2) -- (b2);
        \draw (t3) -- (b3);
        \draw (tm) -- (bm);
        \node[circle,draw,fill=white,minimum size=6mm,inner sep=1pt] at (t1) {$\overline{1}$};
        \node[circle,draw,fill=white,minimum size=6mm,inner sep=1pt] at (t2) {$\overline{2}$};
        \node[circle,draw,fill=white,minimum size=6mm,inner sep=1pt] at (t3) {$\overline{3}$};
        \node[circle,draw,fill=white,minimum size=6mm,inner sep=1pt] at (tm) {$\overline{m}$};
        \node[circle,draw,fill=white,minimum size=6mm,inner sep=1pt] at (b1) {$1$};
        \node[circle,draw,fill=white,minimum size=6mm,inner sep=1pt] at (b2) {$2$};
        \node[circle,draw,fill=white,minimum size=6mm,inner sep=1pt] at (b3) {$3$};
        \node[circle,draw,fill=white,minimum size=6mm,inner sep=1pt] at (bm) {$m$};
        \node at (7,1.6) {$\cdots$};
        \node at (7,0) {$\cdots$};
    \end{tikzpicture}
    \caption{$SP(m,2)$}
    \label{fig:SP(m,2)}
\end{figure}

In the following displayed sets, vertex labels are read modulo $m$. For example, $m+1=1$ and $\overline{m+1}=\overline{1}$. Here, we interpret $\{0,\dots,k-1\}$ as the empty set when $k=0$.

\begin{theorem}\label{thm4}
    Let $m\geq 3$. The game on $SP(m,2)$ is an N-position if $m\equiv 1,2 \pmod{3}$.
\end{theorem}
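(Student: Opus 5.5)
The plan is to exhibit an explicit winning first move for Player~1 and certify it with an even kernel. Theorem~\ref{EvenKernel} guarantees that the position after the move is a P-position. Its first part ("if $S$ is an even kernel and $v\in S$, then P-position") holds for arbitrary simple graphs, so non-bipartiteness of $SP(m,2)$ for odd $m$ does not matter. By vertex-transitivity we start at Vertex~$1$. Player~1 moves along an edge $1u$, and we must produce an even kernel $S$ of $G-1u$ with $u\in S$. Then $(G-1u,u)$ is a P-position for Player~2, so $(G,1)$ is an N-position.

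I would split into the cases $m=3q+1$ and $m=3q+2$ and look for kernels that are periodic with period $3$ around the cycle. The local idea is to place kernel vertices on a "zigzag" of the ladder: a pattern such as $k\in S$, $\overline{k+1}\notin S$, $\overline{k+2}\in S$, and so on, switching rows every few steps. The period-$3$ pattern is natural because on an infinite ladder $P_\infty\square P_2$ an independent set in which every outside vertex sees an even number of set-neighbours has a rigid local structure. Moreover, the fact that P-positions occur exactly when $3\mid m$ (Theorem~\ref{thm:main2}) suggests such a kernel closes up perfectly around $C_m$ precisely when $3\mid m$. When $3\nmid m$ the pattern fails to close, and the defect of $1$ or $2$ columns must be absorbed by the deleted edge $1u$. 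This is why the theorem statement allows sets $\{0,\dots,k-1\}$ to be empty. Concretely, I would try sets of the shape
\[
S=\{u\}\cup\{\,3k+c_1,\ \overline{3k+c_2} : 0\le k\le q-1\,\}
\]
with offsets $c_1,c_2$ chosen per case, and with $u$ equal to $\overline{1}$ (rung move) or $2$ (cycle move), whichever makes the boundary work. I would determine these offsets by brute force on $m=4,5,7,8$ before generalizing.

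The verification then has three steps. First, check that $S$ is independent in $G-1u$; note that $1$ and $u$ may both be in $S$, since their edge is deleted. Second, check the even-neighbour condition for generic vertices in the periodic middle: each non-kernel vertex has $0$ or $2$ kernel neighbours, by a direct check on one period. Third, check the handful of vertices near the seam (columns $m-1,m,1,2,3$), where the period breaks and where the missing edge $1u$ changes degrees. In particular, Vertex~$1$ itself, if it is not in $S$, must have an even count after losing $u$.

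The main obstacle is finding offsets for which the seam works in both residue classes. If a single move $1\to u$ with a kernel of $G-1u$ cannot be found in one class, I would fall back on a two-step argument: show that every reply of Player~2 from $u$ leads to a position of Player~1's choosing that again admits an even kernel in the twice-reduced graph. Small cases $m=4,5$ would be checked by hand or computer to anchor the pattern.
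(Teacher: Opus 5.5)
Your framework is sound: move from Vertex~$1$, then certify $(G-(1,u),u)$ with an even kernel containing $u$. The first half of Theorem~\ref{EvenKernel} indeed needs no bipartiteness. But the proposal never exhibits a kernel, and the template you commit to cannot work.

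On any run of columns not touched by a deleted edge, column $c+1$ is forced by columns $c-1$ and $c$. If a vertex of column $c$ lies in $S$, independence excludes its horizontal neighbour in column $c+1$; otherwise the parity condition at that vertex decides it. Iterating, the only nonempty pattern is, up to shift, (bottom, top, empty, top, bottom, empty) with period $6$. In particular, the order of the two kernel vertices swaps from one triple of columns to the next. So no offsets $c_1,c_2$ make $\{3k+c_1,\overline{3k+c_2}\}$ work once two consecutive triples lie away from the seam.

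Concretely, for $m=8$ the unique even kernel of $G-(1,\overline{1})$ containing $\overline{1}$ is $\{\overline{1},\overline{3},4,6,\overline{7}\}$, which is not of your form, and $G-(1,2)$ has no even kernel containing $2$. Your heuristic is also off: $SP(m,2)$ has an even kernel iff $6\mid m$, not $3\mid m$. This is consistent with the paper's remark that $SP(3,2)$ has none.

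Running the same recurrence through the seam shows the correct split is modulo $6$. $G-(1,\overline{1})$ has an even kernel containing $\overline{1}$ exactly when $m\equiv 2,4\pmod 6$, and $G-(1,2)$ has one containing $2$ exactly when $m\equiv 4,5\pmod 6$. So within $m\equiv 2\pmod 3$ the right choice of $u$ depends on $m \bmod 6$. For $m\equiv 1\pmod 6$ no first move can be certified by a single kernel at all. Your ``fallback'' is therefore a required case, and it is exactly what you leave unspecified.

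The paper handles it as follows. Player~1 plays $1\to 2$. If Player~2 answers $\overline{2}$, Player~1 plays $\overline{1}$; if Player~2 answers $3$, Player~1 plays $4$. The resulting positions are certified by the kernels $\{\overline{1}\}\cup\{\overline{6i+2},\overline{6i+4},6i+5,6i+7\}$ and $\{1,4,\overline{5},\overline{7}\}\cup\{6i+8,6i+10,\overline{6i+11},\overline{6i+13}\}$ of the thrice-reduced graphs. With $m=6k+1$, the second family must stop at $i=k-2$. The printed range $i\le k-1$ overshoots the cycle, and for $m=7$ the kernel is just $\{1,4,\overline{5},\overline{7}\}$.

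Without explicit sets and their seam checks for all four residue classes modulo $6$, the proposal is a search plan rather than a proof.
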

\begin{proof}
    Let $G=SP(m,2)$. 
    We consider the cases $m=1+6k$, $m=2+6k$, $m=4+6k$, and $m=5+6k$. We aim to find a winning strategy of Player 1.

    \begin{description}
        \item[Case 1: $m=2+6k$.] Player 1 moves from Vertex $1$ to Vertex $\overline{1}$. An even kernel of $G-(1,\overline{1})$ containing Vertex $\overline{1}$ is:
        $$
            \{\overline{1}\}\cup\{\overline{6i+3},6i+4,6i+6,\overline{6i+7} : i\in\{0,\dots,k-1\}\}.
        $$

        \item[Case 2: $m=4+6k$.] Player 1 moves from Vertex $1$ to Vertex $2$. An even kernel of $G-(1,2)$ containing Vertex $2$ is:
        $$
            \{\overline{1},2,4\}\cup\{\overline{6i+5},\overline{6i+7},6i+8,6i+10 : i\in\{0,\dots,k-1\}\}.
        $$

        \item[Case 3: $m=5+6k$.] Player 1 moves from Vertex $1$ to Vertex $2$. An even kernel of $G-(1,2)$ containing Vertex $2$ is:
        $$
            \{1,2,\overline{3},\overline{5}\}\cup\{6i+6,6i+8,\overline{6i+9},\overline{6i+11} : i\in\{0,\dots,k-1\}\}.
        $$

        \item[Case 4: $m=1+6k$.] Player 1 moves from Vertex $1$ to Vertex $2$. We consider two possible moves for Player 2, as follows.
        \begin{description}
            \item[Case 4.1: Player 2 moves to Vertex $\overline{2}$.] Player 1 moves to Vertex $\overline{1}$. An even kernel of $G - \{(1,2),(2,\overline{2}),(\overline{2},\overline{1})\}$ containing Vertex $\overline{1}$ is:
            $$
                \{\overline{1}\}\cup\{\overline{6i+2},\overline{6i+4},6i+5,6i+7 : i\in\{0,\dots,k-1\}\}.
            $$

            \item[Case 4.2: Player 2 moves to Vertex $3$.] Player 1 moves to Vertex $4$. An even kernel of $G - \{(1,2),(2,3),(3,4)\}$ containing Vertex $4$ is:
            $$
                \{1,4,\overline{5},\overline{7}\}\cup\{6i+8,6i+10,\overline{6i+11},\overline{6i+13} : i\in\{0,\dots,k-1\}\}
            $$    
        \end{description}
    \end{description}
    By Theorem~\ref{EvenKernel}, the game on $SP(m,2)$ is an N-position if $m=1+6k$, $m=2+6k$, $m=4+6k$, or $m=5+6k$.
\end{proof}

In Theorem~\ref{thm4}, we are able to find an even kernel after a few steps.
However, we will show that the game on $SP(3,2)$ is a P-position, but there is no even kernel of $SP(3,2)$.

\begin{theorem}\label{thm5}
    The game on $SP(3,2)$ is a P-position.
\end{theorem}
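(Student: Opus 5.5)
Since $SP(3,2)$ is the triangular prism, with triangles $1,2,3$ and $\overline{1},\overline{2},\overline{3}$ joined by the rungs $i\overline{i}$, the game is small: $9$ edges, every vertex of degree $3$. I will show directly that Player 2 has a winning reply to every first move from Vertex $1$. This must be a direct argument, because, as remarked just before the statement, $SP(3,2)$ has no even kernel, so Theorem~\ref{EvenKernel} cannot be applied at the root.

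\textbf{Reduction by symmetry.} The automorphism of $SP(3,2)$ that swaps $2\leftrightarrow 3$ and $\overline{2}\leftrightarrow\overline{3}$ fixes Vertex $1$. Up to this automorphism, Player 1 has only two first moves:
\begin{itemize}
\item[(A)] along the rung, from $1$ to $\overline{1}$;
\item[(B)] along a triangle edge, from $1$ to $2$.
\end{itemize}

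\textbf{Plan for each case.} For each first move I will fix an explicit reply $w$ for Player 2. I then try to certify that the resulting position $(H,w)$ is again a P-position, where $H$ is $SP(3,2)$ with the two used edges deleted.

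My first attempt is to find an even kernel of $H$ containing $w$ and invoke Theorem~\ref{EvenKernel}, exactly as in Case~4 of Theorem~\ref{thm4}. With only $6$ vertices this is a quick search over independent sets containing $w$.

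Natural candidate replies are:
\begin{itemize}
\item in case (A), $\overline{1}\to\overline{2}$, the mirror of Player 1's move reflected across the rung;
\item in case (B), either $2\to\overline{2}$ or $2\to 3$.
\end{itemize}

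\textbf{Expected obstacle.} The main difficulty is that some of these deleted graphs may still contain a triangle and admit no even kernel through $w$. A quick check in case (A) already suggests this: the obvious candidate sets containing $\overline{2}$ fail the parity condition at Vertex $3$ or Vertex $\overline{3}$.

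When this happens, I fall back on a short explicit game tree. At each node the current player has at most two available moves, since the root has at most $2$ undeleted edges after arriving. I will push the analysis one or two more plies until the remaining graph is a forest, or is bipartite with an evident even kernel. At that point Theorem~\ref{EvenKernel}, or simple parity of path lengths, settles the position.

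I will choose Player 2's reply $w$ in each case to make this tree as small as possible. I expect each case to close within about two more moves, because each deletion leaves the root with few options in a graph of $9$ edges. If the case analysis is organised by the automorphisms of the prism, only a handful of terminal positions need checking.
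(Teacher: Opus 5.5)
\textbf{This is a plan, not a proof.} Your outline has the same shape as the paper's argument: the symmetry reduction to the first moves $1\to\overline{1}$ and $1\to 2$, an even kernel after Player 2's reply where one exists, and a short game tree otherwise. But you stop before doing the verification, which is the entire content of this statement on a nine-edge graph. You do not commit to Player 2's reply in case (B). You do not exhibit a single kernel, and you do not close the tree in case (A). ``I expect each case to close within about two more moves'' is not an argument.

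\textbf{The choice of reply in case (B) matters.} Of your two candidates there, $2\to\overline{2}$ actually loses. Player 1 answers $\overline{2}\to\overline{3}$. In $G-\{(1,2),(2,\overline{2}),(\overline{2},\overline{3})\}$ the set $\{1,\overline{3}\}$ is an even kernel containing $\overline{3}$, with Player 2 to move. Concretely, if Player 2 goes to $3$, Player 1 goes to $2$; if Player 2 goes to $\overline{1}$, Player 1 goes to $\overline{2}$. Either way Player 2 is stranded. The correct reply is $2\to 3$. Then $\{\overline{1},2,3\}$ is an even kernel of $G-\{(1,2),(2,3)\}$ containing $3$, and Theorem~\ref{EvenKernel} finishes the case.

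\textbf{Case (A) closes after one more exchange.} You are right that no even kernel of $G-\{(1,\overline{1}),(\overline{1},\overline{2})\}$ contains $\overline{2}$, so a tree is needed here.
If Player 1 moves $\overline{2}\to\overline{3}$, Player 2 returns to $\overline{1}$. All three edges at $\overline{1}$ are now deleted, so Player 1 cannot move.
If Player 1 moves $\overline{2}\to 2$, Player 2 moves to $1$. The remaining edges are $(1,3),(2,3),(3,\overline{3}),(\overline{2},\overline{3}),(\overline{3},\overline{1})$, and $\{1,2\}$ is an even kernel of this graph containing $1$. Theorem~\ref{EvenKernel} again applies.

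With these specific replies and checks written out, your outline becomes exactly the paper's proof. Without them, it is only a search strategy, and one of its branches would lose.
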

\begin{proof}
    We aim to find a winning strategy of Player 2. Since the game starts at Vertex $1$, we can divide the proof into two cases.

    \begin{description}
        \item[Case 1: Player 1 moves to vertex $2$.] Player 2 moves to Vertex $3$ and gets an even kernel $\{\overline{1},2,3\}$.

        \item[Case 2: Player 1 moves to Vertex $\overline{1}$.] Player 2 moves to Vertex $\overline{2}$.
        \begin{description}
            \item[Case 2.1: Player 1 moves to Vertex $\overline{3}$.] Player 2 responds by moving to Vertex $\overline{1}$. Player 1 cannot make a next move, so Player 2 wins.
            \item[Case 2.2: Player 1 moves to Vertex $2$.] Player 2 responds by moving to Vertex $1$ and gets an even kernel $\{1,2\}$.
        \end{description}
    
    \end{description}
    From all cases, Player 2 has a winning strategy.
    Therefore, the game on $SP(3,2)$ is a P-position.
\end{proof}

\begin{theorem}\label{thm6}
    Let $m\geq 3$. The game on $SP(m,2)$ is a P-position if $m\equiv 0 \pmod{3}$.
\end{theorem}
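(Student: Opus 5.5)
The plan is to extend the strategy of Theorem~\ref{thm5} from $SP(3,2)$ to all $m=3k$. As there, Player 2 answers each of Player 1's first few moves so that the resulting position has an even kernel containing the current root. By Theorem~\ref{EvenKernel}, that position is then a P-position for the player about to move, which is Player 1. By vertex-transitivity we start at Vertex $1$. By the reflection symmetry $i\mapsto 2-i$ of the cycle, Player 1 has essentially two opening moves: $1\to 2$ (the move $1\to m$ being symmetric) and $1\to\overline{1}$.

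\textbf{Case 1: Player 1 moves to $2$.} Player 2 moves to $3$, exactly as in Theorem~\ref{thm5}. We then exhibit an even kernel of $G-\{(1,2),(2,3)\}$ containing Vertex $3$. Following the shape of the kernels in Theorem~\ref{thm4}, I would look for an initial block near vertices $1,2,3$ (for $m=3$ this is $\{\overline{1},2,3\}$), followed by a block repeated with period $3$ or $6$ along the cycle. Since $m=3k$ is not necessarily a multiple of $6$, I expect to split into the subcases $m\equiv 0$ and $m\equiv 3 \pmod 6$, each with its own initial block. In each subcase the verification is local: check independence, using that $(1,2)$ and $(2,3)$ are deleted so that $2$ and $3$ may both lie in the set. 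Then check the even-neighbour condition at every vertex of one period and at the seam where the last block meets the initial block. The seam check is exactly where $3\mid m$ is used.

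\textbf{Case 2: Player 1 moves to $\overline{1}$.} Player 2 moves to $\overline{2}$. Player 1 now has two options, $\overline{2}\to\overline{3}$ and $\overline{2}\to 2$.
\begin{itemize}
\item If Player 1 moves $\overline{2}\to 2$, Player 2 returns $2\to 1$. We then need an even kernel of $G-\{(1,\overline1),(\overline1,\overline2),(\overline2,2),(2,1)\}$ containing $1$, generalizing $\{1,2\}$. Here the deleted $4$-cycle makes vertices $1$ and $2$ have degree $1$ in the remaining graph, and a periodic completion along the remaining ladder should work.
\item If Player 1 moves $\overline{2}\to\overline{3}$, then for $m=3$ Player 2 simply closes the top triangle. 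For general $m$, I would have Player 2 move $\overline{3}\to 3$ or $\overline3\to\overline4$, whichever admits an even kernel through the current vertex. The remaining graph is then a ladder with a short deleted path at one end, so a periodic kernel of the same type as in Case 1 is again expected.
\end{itemize}

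The main obstacle is finding these explicit kernels and making them close up around the cycle when $3\mid m$. It is also possible that in some subcase no even kernel exists, as already happens for $SP(3,2)$ itself. If so, I would add one more round of move/response and look for a kernel after that, possibly supplemented by an induction on $k$. The induction would be based on the observation that a three-column stretch of the ladder untouched by the play can be absorbed by a repeating kernel block.
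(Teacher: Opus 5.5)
\textbf{Gap: the even kernels are never exhibited.} Your game tree is the paper's. After reducing to the openings $1\to2$ and $1\to\overline{1}$, Player~2 answers $2\to3$, respectively $\overline{1}\to\overline{2}$, then meets $\overline{2}\to2$ or $\overline{2}\to\overline{3}$, and each line is closed with Theorem~\ref{EvenKernel}. But you stop where the proof starts. The whole content of this theorem is the list of explicit even kernels and their verification. You give none of them, and you leave open whether they exist (``should work'', ``is expected'', possibly extra rounds or an induction on $k$).

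The hedge matters in the $\overline{2}\to\overline{3}$ line, because one of your two candidate replies fails. The reply $\overline{3}\to3$ admits no even kernel through the root. Already for $m=9$, consider $G-\{(1,\overline{1}),(\overline{1},\overline{2}),(\overline{2},\overline{3}),(\overline{3},3)\}$ with a kernel $S\ni 3$:
\begin{enumerate}
\item The leaf $\overline{3}$ forces $\overline{4}\notin S$.
\item The parity conditions at $4,\overline{5},6,\overline{7},8$ then force $5,\overline{6},\overline{8},9\in S$.
\item Vertex $1$ has only two remaining neighbours, $2\notin S$ and $9\in S$, so its count is odd.
\end{enumerate}
So the reply must be $\overline{3}\to\overline{4}$.

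\textbf{What the paper supplies.} For $m=6k$ no play is needed at all. $SP(6k,2)$ is bipartite, and $\{6i+1,\overline{6i+2},\overline{6i+4},6i+5 : 0\le i\le k-1\}$ is an even kernel containing $1$. So your planned split of Case~1 by residue mod $6$ is unnecessary. Your reply $2\to3$ does also work there, with the even kernel $\{6i+1,6i+3,\overline{6i+4},\overline{6i+6}\}$.

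For $m=3+6k$, with indices $0\le i\le k-1$ throughout:
\begin{enumerate}
\item After $1\to2\to3$, use $\{\overline{1},2,3\}\cup\{\overline{6i+4},\overline{6i+6},6i+7,6i+9\}$.
\item After $\overline{2}\to\overline{3}\to\overline{4}$, use $\{\overline{1}\}\cup\{\overline{6i+4},6i+5,6i+7,\overline{6i+8}\}$.
\item After $\overline{2}\to2$, the paper replies $2\to3$ with $\{2,\overline{2},3\}\cup\{6i+5,\overline{6i+6},\overline{6i+8},6i+9\}$. Your reply $2\to1$ also works, with $\{1,2\}\cup\{6i+4,\overline{6i+5},\overline{6i+7},6i+8\}$.
\end{enumerate}
Write these sets down and check them residue class by residue class mod $6$, including the seam from $m$ back to $1$. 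Then your outline becomes a complete proof, and no additional rounds or induction are needed.
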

\begin{proof}
    Let $G=SP(m,2)$.
    We aim to find a winning strategy of Player 2.

    \begin{description}
        \item[Case 1: $m=6k$.] An even kernel of $SP(m,2)$ containing Vertex $1$ is:
        $$
            \{6i+1,\overline{6i+2},\overline{6i+4},6i+5 : i\in\{0,\dots,k-1\}\}.
        $$
        
        \item[Case 2: $m=3+6k$.] The case of $SP(3,2)$ is already proved in Theorem~\ref{thm5}.
        Next, we consider the case $m>3$.
        Without loss of generality, we can only consider two subcases where Player 1 moves to Vertex $2$ or Vertex $\overline{1}$.

        \begin{description}
            \item[Case 2.1: Player 1 moves to Vertex $2$.] Player 2 will move to Vertex $3$ gets an even kernel of $G-\{(1,2),(2,3)\}$ as:
            $$
                \{\overline{1},2,3\} \cup \{\overline{6i+4},\overline{6i+6},6i+7,6i+9 : i\in\{0,\dots,k-1\}\}.
            $$
        
            \item[Case 2.2: Player 1 moves to Vertex $\overline{1}$.] Player 2 then moves to Vertex $\overline{2}$. Now, Player 1 only has two choices.

            \begin{description}
                \item[Case 2.2.1: Player 1 moves to vertex $2$.] Player 2 moves to Vertex $3$ and gets an even kernel of $G-\{(1,\overline{1}),(\overline{1},\overline{2}),(\overline{2},2),(2,3)\}$ as:
                $$
                    \{2,\overline{2},3\} \cup \{6i+5,\overline{6i+6},\overline{6i+8},6i+9 : i\in\{0,\dots,k-1\}\}.
                $$

                \item[Case 2.2.2: Player 1 moves to Vertex $\overline{3}$.] Player 2 moves to Vertex $\overline{4}$ and an even kernel of $G-\{(1,\overline{1}),(\overline{1},\overline{2}),(\overline{2},\overline{3}),(\overline{3},\overline{4})\}$ as:
                $$
                    \{\overline{1}\} \cup \{\overline{6i+4},6i+5,6i+7,\overline{6i+8} : i\in\{0,\dots,k-1\}\}.
                $$
            \end{description}
        \end{description}
    \end{description}
    From all cases, Player 2 always has a winning strategy.
    Hence the game on $SP(m,2)$ is a P-position if $m=6k$ or $m=3+6k$.
\end{proof}

From Theorems~\ref{thm4}, \ref{thm5} and \ref{thm6}, we immediately get Theorem~\ref{thm:main2}.

We see that, even in a simple case of $n=2$, it is not easy to determine the position of the game. A similar analysis gets extensively more complex when $n>2$.

\section*{Acknowledgements}

Tharit Sereekiatdilok was partially supported by the Development and Promotion of Science and Technology Talents Project (DPST). The authors would like to thank the anonymous reviewers for their valuable comments and constructive suggestions, which significantly improved the quality of this manuscript.

\bibliographystyle{amsplain}
\bibliography{ref}

@ARTICLE{FRAENKEL1993197,
title = {Geography},
journal = {Theoretical Computer Science},
volume = {110},
number = {1},
pages = {197-214},
year = {1993},
issn = {0304-3975},
doi = {10.1016/0304-3975(93)90356-X},
author = {Aviezri S Fraenkel and Shai Simonson}
}

@ARTICLE{Fraenkel1993,
title = {Undirected edge geography},
journal = {Theoretical Computer Science},
volume = {112},
number = {2},
pages = {371-381},
year = {1993},
issn = {0304-3975},
doi = {10.1016/0304-3975(93)90026-P},
author = {Aviezri S Fraenkel and Edward R Scheinerman and Daniel Ullman}
}

@article{sereekiatdilok2026undirected,
	title={Undirected edge geography games on grids},
	author={Sereekiatdilok, Tharit and Vichitkunakorn, Panupong},
	journal={Discrete Applied Mathematics},
	volume={393},
	pages={280--286},
	year={2026},
	publisher={Elsevier}
}

\end{document}